\def\titlerunning#1{\gdef\titrun{#1}}
\def\author#1{\gdef\autrun{\def\and{\unskip, }#1}\gdef\@author{#1}}
\def\address#1{{\def\and{\\\hspace*{15.6pt}}\renewcommand{\thefootnote}{}\footnote{#1}}\markboth{\autrun}{\titrun}}
\def\email#1{email: \href{mailto:#1}{#1} }
\def\subjclass#1{\par\bigskip\noindent\textbf{Mathematics Subject Classification 2020.} #1}
\def\keywords#1{\par\smallskip\noindent\textbf{Keywords.} Periodic differential operators, homogenization, operator error estimates}
\newenvironment{dedication}{\itshape\center}{\par\medskip}
\newenvironment{acknowledgments}{\bigskip\small\noindent\textit{Acknowledgments.}}{\par}
\newtheorem{thm}{Theorem}[section]
\newtheorem{condition}[thm]{Condition}
\newtheorem{proposition}[thm]{Proposition}
\newtheorem{corollary}[thm]{Corollary}
\theoremstyle{definition}
\newtheorem*{rem}{Remark}
\numberwithin{equation}{section}
\begin{document}

\titlerunning{Homogenization of the higher-order Schr{\"o}dinger-type equations}

\title{\textbf{Homogenization of the higher-order Schr{\"o}dinger-type equations  
\\with periodic coefficients}}

\author{Tatiana Suslina}

\date{}

\maketitle

\address{T. A. Suslina: St. Petersburg State University, Universitetskaya nab. 7/9,
St.~Petersburg, 199034, Russia; \email{t.suslina@spbu.ru}}

\begin{dedication}
To Ari Laptev on the occasion of his 70th birthday
\end{dedication}

\begin{abstract}
   In $L_2({\mathbb R}^d; {\mathbb C}^n)$, we consider a  
matrix strongly elliptic differential operator ${A}_\varepsilon$ of order $2p$, $p \geqslant 2$. 
The operator ${A}_\varepsilon$ is given by  
${A}_\varepsilon = b(\mathbf{D})^* g(\mathbf{x}/\varepsilon) b(\mathbf{D})$, $\varepsilon >0$,
 where $g(\mathbf{x})$ is a periodic, bounded, and positive definite matrix-valued function, and $b(\mathbf{D})$ 
 is a homogeneous differential operator of order $p$. We prove that, for fixed $\tau \in {\mathbb R}$ and $\varepsilon \to 0$,  the operator exponential 
 $e^{-i \tau {A}_\varepsilon}$ converges to $e^{-i \tau {A}^0}$ in the norm of operators acting from the Sobolev space $H^s({\mathbb R}^d; {\mathbb C}^n)$ (with a suitable $s$) into $L_2({\mathbb R}^d; {\mathbb C}^n)$. 
 Here $A^0$ is the effective operator. Sharp-order error estimate is obtained.  The results are applied to homogenization of the Cauchy problem 
 for the Schr{\"o}dinger-type \hbox{equation} $i \partial_\tau {\mathbf u}_\varepsilon = {A}_\varepsilon {\mathbf u}_\varepsilon + {\mathbf F}$, ${\mathbf u}_\varepsilon\vert_{\tau=0} = \boldsymbol{\phi}$.
 
\subjclass{Primary 35B27}
\keywords{Homogenization, periodic differential operators, operator error estimates}
\end{abstract}

\section*{Introduction}

The paper concerns homogenization theory of periodic differential operators (DOs). First of all, we mention the books
\cite{BeLP}, \cite{ZhKO}. 

\subsection{Operator error estimates} 

In a series of papers \cite{BSu1,BSu2,BSu3} by Birman and Suslina, an operator-theoretic  (spectral) approach to homogenization problems was developed. In $L_2({\mathbb R}^d; {\mathbb C}^n)$, a wide class of 
matrix strongly elliptic second order DOs ${A}_\varepsilon$ was studied. 
The operator ${A}_\varepsilon$ is given by  
${A}_\varepsilon = b(\mathbf{D})^* g(\mathbf{x}/\varepsilon) b(\mathbf{D})$, $\varepsilon >0$,
 where $g(\mathbf{x})$ is a bounded and positive definite \hbox{$(m\times m)$-matrix-valued} function
 periodic with respect to some lattice \hbox{$\Gamma \subset {\mathbb R}^d$}, and $b(\mathbf{D}) = \sum_{l=1}^d b_l D_l$ is a first order DO. Here $b_l$ are constant $(m \times n)$-matrices. It is assumed that $m \geqslant n $ and the symbol  $b(\boldsymbol{\xi})$ has maximal rank.

In \cite{BSu1}, it was shown that the resolvent  $({A}_\varepsilon +I)^{-1}$ converges in the  
operator norm in $L_2({\mathbb R}^d; {\mathbb C}^n)$ to the resolvent of an effective operator ${A}^0$, and 
\begin{equation}
\label{est_A_eps}
 \bigl\| ({A}_\varepsilon +I)^{-1} - ({A}^0+I)^{-1} \bigr\|_{L_2(\mathbb{R}^d) \to L_2(\mathbb{R}^d)} \leqslant C \varepsilon.
\end{equation}
The effective operator is given by ${A}^0= b(\mathbf{D})^* g^0  b(\mathbf{D})$, where $g^0$ is a constant positive matrix called the \textit{effective} matrix. In \cite{Su1}, a similar result was obtained for the parabolic semigroup:
\begin{equation}
\label{parab_est_A_eps}
 \bigl\| e^{- \tau {A}_\varepsilon} - e^{-\tau {A}^0} \bigr\|_{L_2(\mathbb{R}^d)\to L_2(\mathbb{R}^d)} \leqslant C(\tau) \varepsilon,\quad \tau >0.
\end{equation}
Estimates  \eqref{est_A_eps} and \eqref{parab_est_A_eps} are order-sharp. Such inequalities are called  \textit{operator error estimates} in homogenization theory.

A different approach to operator error estimates (the shift method) was developed by Zhikov and Pastukhova. In \cite{Zh,ZhPas1,ZhPas2}, estimates 
\eqref{est_A_eps}, \eqref{parab_est_A_eps} were obtained for the operators of acoustics and elasticity. Further results were discussed in a survey \cite{ZhPas3}.

The operator error estimates for the 
Schr{\"o}dinger-type  and hyperbolic equations were studied in  \cite{BSu4} and in the recent works  \cite{D,DSu1,DSu2,M2,Su4}. 
In operator terms, the behavior of the operator-valued functions  
 $e^{-i \tau {A}_\varepsilon}$,  $\cos (\tau {A}_\varepsilon^{1/2})$, 
${A}_\varepsilon^{-1/2} \sin (\tau {A}_\varepsilon^{1/2})$, \hbox{$\tau \in \mathbb{R}$}, was investigated. It turned out that the nature of the results differed from the case of elliptic and parabolic equations: the type of the operator norm must be changed. 

Let us dwell on the case of the operator exponential $e^{-i \tau {A}_\varepsilon}$. In \cite{BSu4}, the following 
sharp-order estimate was proved:
 \begin{equation}
\label{est_cos_A_eps}
\bigl\| e^{-i \tau {A}_\varepsilon}  - e^{-i \tau {A}^0} \bigr\|_{H^3(\mathbb{R}^d)\to L_2(\mathbb{R}^d)} 
\leqslant C(1+  |\tau|) \varepsilon.
\end{equation}
In \cite{Su4,D}, it was shown that in the general case the result
\eqref{est_cos_A_eps} is sharp both regarding the type of the operator norm and regarding the dependence of the estimate on $\tau$ (it is impossible to replace $(1+|\tau|)$ on the right by  
$(1+|\tau|)^\alpha$ with $\alpha<1$). On the other hand, under some additional assumptions the result admits improvement:
\begin{equation}
\label{usilenie_intro}
 \bigl\| e^{-i \tau {A}_\varepsilon}  - e^{-i \tau {A}^0} \bigr\|_{H^{2}(\mathbb{R}^d) \to L_2(\mathbb{R}^d)}  \leqslant C (1 + |\tau|)^{1/2} \varepsilon.
 \end{equation}

The operator-theoretic approach was applied to the higher-order operators $A_\varepsilon$ in 
\cite{Ven, KuSu}. It was assumed that the operator ${A}_\varepsilon$ is given by  
\begin{equation}
\label{Aeps_operator}
{A}_\varepsilon = b(\mathbf{D})^* g(\mathbf{x}/\varepsilon) b(\mathbf{D}),\quad 
\operatorname{ord} b(\mathbf{D})=p \geqslant 2,\quad \varepsilon >0,
\end{equation}
 where $g(\mathbf{x})$ is a periodic, bounded, and positive definite $(m\times m)$-matrix-valued function, and $b(\mathbf{D}) = \sum_{|\beta|=p} b_\beta D^\beta$. Here $b_\beta$ are constant $(m \times n)$-matrices. It is assumed that $m \geqslant n $ and the symbol  $b(\boldsymbol{\xi})$ has maximal rank.
In  \cite{Ven, KuSu}, an estimate of the form \eqref{est_A_eps} for such operators $A_\varepsilon$ was obtained.
A more accurate approximation for the resolvent of $A_\varepsilon$ was found recently in  \cite{SlSu1, SlSu2}. 
The shift method was applied to homogenization of the elliptic higher-order operators in the papers 
 \cite{P1}, \cite{P2} by Pastukhova.

\subsection{Main results} In the present paper, the behavior of the operator exponential $e^{-i \tau {A}_\varepsilon}$
for the operator ${A}_\varepsilon$ of order $2p$ given by \eqref{Aeps_operator} is studied.
Our main result is the following estimate:
\begin{equation}
\label{main_res}
 \bigl\| e^{-i \tau {A}_\varepsilon}  - e^{-i \tau {A}^0} \bigr\|_{H^{2p+1}(\mathbb{R}^d) \to L_2(\mathbb{R}^d)}  \leqslant C (1 + |\tau|) \varepsilon.
 \end{equation}
 Here $A^0 = b(\mathbf{D})^* g^0 b(\mathbf{D})$ is the effective operator. 
 By the interpolation with the obvious estimate $\|e^{-i \tau {A}_\varepsilon}  - e^{-i \tau {A}^0}\|_{L_2 \to L_2}\leqslant 2$, we also obtain ``intermediate'' results: 
\begin{equation*}
 \bigl\| e^{-i \tau {A}_\varepsilon}  - e^{-i \tau {A}^0} \bigr\|_{H^{s}(\mathbb{R}^d) \to L_2(\mathbb{R}^d)}  \leqslant 
 C(s) (1 + |\tau|)^{s/(2p+1)} \varepsilon^{s/(2p+1)}, \quad 0 \leqslant s \leqslant 2p+1.
 \end{equation*}

Under some additional assumptions formulated in terms of the spectral characteristics of $A=b(\mathbf{D})^* g(\mathbf{x}) b(\mathbf{D})$ near the bottom of the spectrum, it is proved that
\begin{equation}
\label{main_res_improved2}
 \bigl\| e^{-i \tau {A}_\varepsilon}  - e^{-i \tau {A}^0} \bigr\|_{H^{2p+2}(\mathbb{R}^d) \to L_2(\mathbb{R}^d)}  \leqslant C (1 + |\tau|) \varepsilon^2.
 \end{equation}
 This means that the difference $e^{-i \tau {A}_\varepsilon}  - e^{-i \tau {A}^0}$ is of order $O(\varepsilon^2)$
 in a suitable norm.  It should be noted that the imposed additional assumptions are valid automatically for a scalar operator $A_\varepsilon$ (i.~e., $n=1$) with real-valued coefficients.
By the interpolation, we deduce
\begin{equation*}
 \bigl\| e^{-i \tau {A}_\varepsilon}  - e^{-i \tau {A}^0} \bigr\|_{H^{s}(\mathbb{R}^d) \to L_2(\mathbb{R}^d)}  \leqslant 
 C(s) (1 + |\tau|)^{s/(2p+2)} \varepsilon^{s/(p+1)},  \quad 0 \leqslant s \leqslant 2p+2.
 \end{equation*}
 In particular, for $s=p+1$ we have
\begin{equation}
\label{main_res_improved}
 \bigl\| e^{-i \tau {A}_\varepsilon}  - e^{-i \tau {A}^0} \bigr\|_{H^{p+1}(\mathbb{R}^d) \to L_2(\mathbb{R}^d)}  \leqslant C (1 + |\tau|)^{1/2} \varepsilon.
 \end{equation}
 This improves  \eqref{main_res} regarding both the type of the norm and 
 the dependence  on $\tau$.

 We stress that, for the second order operators ${A}_\varepsilon$, there is an analog of \eqref{main_res_improved} (cf. \eqref{usilenie_intro}), but there is no analog of \eqref{main_res_improved2}. 
 
 The above results  are applied to homogenization of the Cauchy problem for the 
 Schr{\"o}dinger-type equation
 $$
 i \,\partial_\tau \mathbf{u}_\varepsilon(\mathbf{x}, \tau) = A_\varepsilon 
\mathbf{u}_\varepsilon(\mathbf{x}, \tau) + \mathbf{F}(\mathbf{x}, \tau),\quad
  \mathbf{u}_\varepsilon(\mathbf{x}, 0) = \boldsymbol{\phi} (\mathbf{x}).
 $$

\subsection{Method} We rely on the operator-theoretic approach. 
By the scaling transformation,  the problem is reduced to the study of the operator $e^{-i \tau \varepsilon^{-2p} A}$.
Next, using the Floquet-Bloch theory, we expand $A$ in the direct integral of the operators 
$A(\mathbf{k})$ acting in $L_2(\Omega;\mathbb{C}^n)$ and given by 
$b(\mathbf{D} + \mathbf{k})^* g(\mathbf{x}) b(\mathbf{D}+ \mathbf{k})$ with periodic boundary conditions; here $\Omega$ is the cell of the lattice $\Gamma$. Since $A(\mathbf{k})$ is an analytic operator family with compact \hbox{resolvent}, it can be studied by means of the analytic perturbation theory with respect to the one-dimensional parameter $t=|\mathbf{k}|$. 
It turns out that only the spectral characteristics of $A(\mathbf{k})$ near the bottom of the spectrum are responsible for  homogenization. It is convenient to study the family $A(\mathbf{k})$ in the framework of 
an abstract operator-theoretic scheme. 

\subsection{Plan of the paper} The paper consists of  five sections. In Section 1, the abstract operator-theoretic method is developed.  In Section 2, we introduce the class of operators $A$ acting in $L_2(\mathbb{R}^d;\mathbb{C}^n)$ and describe the direct integral expansion for $A$. Section 3 is devoted to application of the abstract results to the operator family $A(\mathbf{k})$. In Section 4, using the results of Section 3, we obtain approximations for the operator exponential of $A$. Section 5 is devoted to homogenization problems: we deduce approximations for 
the exponential $e^{-i \tau {A}_\varepsilon}$ from the results of Section 4 and apply them to find approximations for the solutions of the Cauchy problem. 

\subsection{Notation} 
Let $\mathfrak{H}$ and $\mathfrak{H}_*$ be complex separable Hilbert spaces. By $(\cdot,\cdot)_{\mathfrak{H}}$ and $\Vert \cdot \Vert _{\mathfrak{H}}$ we denote the inner product and the norm in $\mathfrak{H}$, respectively;
the symbol $\Vert \cdot \Vert _{\mathfrak{H}\rightarrow \mathfrak{H}_*}$ denotes the norm of a linear continuous operator acting from $\mathfrak{H}$ to $\mathfrak{H}_*$. Sometimes, we omit the indices.
If $\mathfrak{N}$ is a subspace of $\mathfrak{H}$, then ${\mathfrak N}^\perp$ denotes  its
 orthogonal complement. If $A$ is a closed linear operator in $\mathfrak{H}$, its domain and kernel are denoted by 
 $\operatorname{Dom} A$ and $\operatorname{Ker} A$, respectively; $\sigma(A)$ stands for 
 the spectrum of $A$.

The inner product and the norm in $\mathbb{C}^n$ are denoted by $\langle \cdot ,\cdot \rangle$ and $\vert \cdot \vert$, respectively, $\mathbf{1}_n = \mathbf{1}$ is the unit $(n\times n)$-matrix.
We denote  $\mathbf{x}=(x_1,\dots,x_d)\in \mathbb{R}^d$, $iD_j=\partial /\partial x_j$, $j=1,\dots,d$, $\mathbf{D}=-i\nabla =(D_1,\dots,D_d)$.

The class $L_2$ of $\mathbb{C}^n$-valued functions in a domain $\mathcal{O}\subset \mathbb{R}^d$  is denoted by $L_2(\mathcal{O};\mathbb{C}^n)$. The Sobolev classes of  $\mathbb{C}^n$-valued functions in a domain  $\mathcal{O}$ are denoted by $H^s(\mathcal{O};\mathbb{C}^n)$. For $n=1$, we write simply $L_2(\mathcal{O})$, $H^s(\mathcal{O})$, but sometimes we use such simple notation also for the spaces of vector-valued or matrix-valued functions.

\section{Abstract operator-theoretic scheme\label{abstr_scheme}}

\subsection{Polynomial nonnegative operator pencils\label{sec_abstract_scheme_1.1}} 
Let $\mathfrak{H}$ and $\mathfrak{H}_*$ be complex separable Hilbert spaces. 
Let $X(t)$ be a family of operators (a polynomial pencil) of the form
\begin{equation*}
X(t)=\sum_{j=0}^{p}X_{j}t^{j},\ \ t\in\mathbb{R},\ \ p\in\mathbb{N},\ \ p\geqslant 2.
\end{equation*}
The operators $X(t)$, $X_{j}$ act from $\mathfrak{H}$ into $\mathfrak{H}_*$. It is assumed that the operator $X_{0}$ is densely defined and closed, while the operator $X_{p}$ is defined on the whole $\mathfrak{H}$ and bounded. 
In addition, we impose the following conditions.

\begin{condition}\label{sup1}
For any $j=0,\dots,p$ and $t\in\mathbb{R}$, we have 
\begin{equation*}
\operatorname{Dom}X(t)=\operatorname{Dom}X_{0}\subset \operatorname{Dom}X_{j}
\subset \operatorname{Dom}X_{p}=\mathfrak{H}.
\end{equation*}
\end{condition}

\begin{condition}\label{sup2}
For any $j=0,\dots,p-1$ and $u\in \operatorname{Dom}X_{0}$ we have 
\begin{equation*}
\|X_{j}u\|_{\mathfrak{H}_*}\leqslant C_{0}\|X_{0}u\|_{\mathfrak{H}_*},
\end{equation*}
where a constant $C_{0}\geqslant 1$ is independent of $j$ and $u$.
\end{condition}

Under the above assumptions, the operator $X(t)$ is closed for $|t|\leqslant (2 (p-1)C_{0})^{-1}$. 

\emph{Our main object} is the following family of nonnegative selfadjoint operators in $\mathfrak{H}$:
\begin{equation*}
A(t)=X(t)^* X(t),\ \ t\in\mathbb{R},\ \  |t|\leqslant (2 (p-1)C_{0})^{-1}.
\end{equation*}
Denote $A(0)=X_{0}^{*}X_{0}=:A_{0}$,  
$\mathfrak{N}:= \operatorname{Ker}A_{0}= \operatorname{Ker}X_{0}$, and 
$\mathfrak{N}_* := \operatorname{Ker}X_{0}^*$.
Let $P$ be the orthogonal projection of $\mathfrak{H}$ onto the subspace $\mathfrak{N}$, and let 
 $P_*$ be the orthogonal projection of $\mathfrak{H}_*$ onto the subspace $\mathfrak{N}_*$.
 
\begin{condition}\label{sup3}
Suppose that the point $\lambda_{0}=0$ is an isolated point of the spectrum of $A_{0}$, and 
$n:=\operatorname{dim}\mathfrak{N}<\infty$, $n \leqslant n_* := \operatorname{dim}\mathfrak{N}_* \leqslant \infty.$
\end{condition}

By $d^{0}$ we denote the distance from the point $\lambda_{0}=0$ to $\sigma(A_{0})\setminus\{\lambda_{0}\}$. 
Let $F(t,h)$ be the spectral projection of the operator $A(t)$ corresponding to the interval
$[0,h]$. We fix a positive number $\delta\leqslant \min\{d^{0}/36,1/4\}$ and choose a number $t_0 >0$ such that 
\begin{equation}\label{1.5z}
t_{0} \leqslant \delta^{1/2}C_1^{-1}, \quad \text{where}\ \ C_1 = \max \{ (p-1) C_0, \|X_p\| \}.
\end{equation}
Note that $t_0 \leqslant 1/2$. The operator $X(t)$ is automatically closed for $|t|\leqslant t_{0}$, because 
$t_{0}\leqslant (2 (p-1)C_{0})^{-1}$.
According to \cite[Proposition 3.10]{Ven}, for  $|t|\leqslant t_{0}$ we have 
\begin{equation*}
F(t,\delta)=F(t,3\delta), \quad \operatorname{rank}F(t,\delta)=n.
\end{equation*}
This means that, for $|t|\leqslant t_{0}$,  
the operator $A(t)$ has exactly $n$ eigenvalues counting their multiplicities on the interval $[0,\delta]$,  and the interval $(\delta,3\delta)$ is free of the spectrum.
We write $F(t):=F(t,\delta)$.

\subsection{Operators $Z$, $R$, and $S$\label{secZ,R,S}}

Let $\mathcal{D} = \operatorname{Dom}X_{0} \cap \mathfrak{N}^\perp$.
Obviously, $\mathcal{D}$ is a Hilbert space with the inner product 
$(f_1, f_2)_{\mathcal{D}} = \left( X_0 f_1, X_0 f_2 \right)_{\mathfrak{H}_*}$, $f_1, f_2 \in \mathcal{D}$.

Let $u \in \mathfrak{H}_*$. Consider the equation $X_0^* (X_0 \psi - u) =0$ for $\psi \in \mathcal{D}$,
which is understood in the weak sense:
\begin{equation}
\label{1.4zz}
\left( X_0 \psi, X_0 \zeta \right)_{\mathfrak{H}_*} = \left( u, X_0 \zeta \right)_{\mathfrak{H}_*},
\quad \forall \zeta \in \mathcal{D}.
\end{equation}
The right-hand side of \eqref{1.4zz} is an antilinear continuous functional of $\zeta \in \mathcal{D}$. 
Hence, by the Riesz theorem, there exists a unique solution $\psi \in \mathcal{D}$, and
 $\| X_0 \psi \|_{\mathfrak{H}_*} \leqslant \| u \|_{\mathfrak{H}_*}$. 
 
 Now, let $\omega \in \mathfrak{N}$ and $u = - X_p \omega$. In this case, the solution of equation 
 \eqref{1.4zz} is denoted by $\psi(\omega)$. 
 We define a bounded linear operator $Z: \mathfrak{H} \to \mathcal{D}$ putting
\begin{equation*}
Z \omega = \psi(\omega), \quad \omega \in \mathfrak{N};\quad
Z v =0, \quad v \in \mathfrak{N}^\perp.
\end{equation*}
  
Next, we define the operator $R: \mathfrak{N} \to \mathfrak{N}_*$  by the relation
 $R \omega = X_0 \psi(\omega) + X_p \omega$, $\omega \in \mathfrak{N}$.
 Another representation for $R$ is given by 
$R = P_* X_p \vert_{\mathfrak{N}}$. The selfadjoint operator 
$
S = R^* R : \mathfrak{N} \to \mathfrak{N}
$
is called the \emph{spectral germ} of the operator family $A(t)$ at $t=0$. 
The germ $S$ is called \emph{non-degenerate} if $\operatorname{Ker} S = \{0\}$.

\subsection{Analytic branches of eigenvalues and eigenvectors of  $A(t)$} 
According to the analytic perturbation theory (see \cite{K} and also 
\cite{Ven}, \cite{KuSu}), for $|t|\leqslant t_0$ there exist real-analytic functions $\lambda_{j}(t)$ (the branches of  eigenvalues) and real-analytic $\mathfrak{H}$-valued functions $\varphi_{j}(t)$ (the branches of eigenvectors) such that 
\begin{equation*}
A(t)\varphi_{j}(t)=\lambda_{j}(t)\varphi_{j}(t),\ \ j=1,\dots,n,\
\ |t|\leqslant t_{0},
\end{equation*}
and the set $\{\varphi_{j}(t)\}_{j=1}^{n}$ forms an orthonormal basis in the space $F(t)\mathfrak{H}$ for  
\hbox{$|t|\leqslant t_{0}$}. For sufficiently small $t_{*}\in (0,t_{0}]$ we have the following convergent 
power series expansions (see \cite[Theorem 3.15]{Ven})
\begin{align}
\label{1.8z}
\lambda_{j}(t)&= \gamma_j t^{2p} +\mu_j t^{2p+1}+ \dots, \quad j=1,\dots,n,\ \ |t|\leqslant t_{*};
\\
\label{1.9z}
\varphi_{j}(t)&=\omega_j + t \varphi_{j}^{(1)} + \dots,\quad j=1,\dots,n,\ \ |t|\leqslant t_{*}.
\end{align}
We have $\gamma_j \geqslant 0$, $\mu_j \in {\mathbb R}$. The set $\omega_1, \dots, \omega_n$ forms an orthonormal basis in $\mathfrak{N}$.
The numbers $\gamma_j$ and the vectors $\omega_j$ are eigenvalues and eigenvectors of the spectral germ:
$S \omega_j = \gamma_j \omega_j$, $j=1,\dots,n.$

\subsection{Threshold approximations} 
The following statement was proved in \cite{Ven, KuSu}. 
Below different constants depending only on $p$ are denoted by $C(p)$.

\begin{proposition}
\label{prop1.1}
Suppose that Conditions {\rm\ref{sup1}}, {\rm\ref{sup2}}, and {\rm\ref{sup3}} are satisfied. 
Then for $|t| \leqslant t_0$ we have
\begin{align}
\label{1.5zz}
\| F(t) - P\| &\leqslant C_2 |t|, \quad C_2 = C(p) C_T,
\\
\label{1.6zz}
\| A(t)F(t) - t^{2p} S P\| &\leqslant C_3 |t|^{2p+1}, \quad C_3 = C(p) C_T^{2p+1}.
\end{align}
Here $C_T = p\, C_0^2 + \|X_p\|^2 \delta^{-1}$.
\end{proposition}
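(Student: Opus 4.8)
The plan is to realize both spectral projections as Riesz integrals over one fixed contour and to compare the resolvents of $A(t)$ and $A_0 = A(0)$ by iterating the resolvent identity. Two structural facts drive everything. First, Condition~\ref{sup2} gives $X_j u = 0$ for every $u\in\mathfrak{N}=\operatorname{Ker}X_0$ and $1\le j\le p-1$; hence $X(t)|_{\mathfrak{N}} = t^p X_p|_{\mathfrak{N}}$, and writing $\mathcal{W}(t):=A(t)-A_0$ for the perturbation (taken in the form sense on $\operatorname{Dom}X_0$) one has the clean identities $\langle\mathcal{W}(t)\omega,\omega'\rangle = t^{2p}\langle X_p\omega, X_p\omega'\rangle$ for $\omega,\omega'\in\mathfrak{N}$ and $\langle\mathcal{W}(t)w,\omega'\rangle = t^p\langle X(t)w, X_p\omega'\rangle$ for $w\in\operatorname{Dom}X_0$, $\omega'\in\mathfrak{N}$ (and symmetrically). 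Second, since $0$ is isolated in $\sigma(A_0)$ with $d^0\ge 36\delta$, we have $(A_0-z)^{-1} = -z^{-1}P + G(z)$, where $G(z):=(A_0-z)^{-1}(I-P)$ is holomorphic for $|z|<36\delta$ and $\|G(z)\|\le C\delta^{-1}$ there, and $G(0) = (A_0|_{\mathfrak{N}^\perp})^{-1}$ is exactly the operator with $G(0)X_0^*X_p\omega = -Z\omega$ (from the definition of $Z$). I fix $\Gamma := \{z:|z|=2\delta\}$; by the recalled fact that $A(t)$ has exactly $n$ eigenvalues in $[0,\delta]$ and $(\delta,3\delta)$ is spectrum-free for $|t|\le t_0$, the circle $\Gamma$ lies in $\rho(A(t))\cap\rho(A_0)$ with $\operatorname{dist}(z,\sigma(A(t)))\ge\delta$ on $\Gamma$, giving $\|(A(t)-z)^{-1}\|\le\delta^{-1}$ and $\|X(t)(A(t)-z)^{-1}\|\le C\delta^{-1/2}$ (from $\|X(t)(A(t)-z)^{-1}\|^2 = \sup_{\lambda\in\sigma(A(t))}\lambda|\lambda-z|^{-2}$), and likewise for $A_0$. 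Using $X(t)-X_0 = tY(t)$ with $Y(t):=\sum_{j=1}^p t^{j-1}X_j$, Condition~\ref{sup2} and boundedness of $X_p$, one checks $\|\mathcal{W}(t)(A_0-z)^{-1}\|\le C(p)C_T|t|$ on $\Gamma$; since $t_0\le\delta^{1/2}C_1^{-1}$, the Neumann series $(A(t)-z)^{-1} = \sum_{k\ge0}(-1)^k[(A_0-z)^{-1}\mathcal{W}(t)]^k(A_0-z)^{-1}$ converges uniformly on $\Gamma$ for $|t|\le t_0$.

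For \eqref{1.5zz} I write $F(t)-P = -\frac{1}{2\pi i}\oint_\Gamma[(A(t)-z)^{-1}-(A_0-z)^{-1}]\,dz$, insert the Neumann series, and substitute the splitting $(A_0-z)^{-1} = -z^{-1}P + G(z)$ at each resolvent slot. Any term in which no factor $-z^{-1}P$ occurs has a holomorphic integrand and drops out; hence $F(t)-P$ is a sum of terms each carrying at least one factor $P$ and at least one $\mathcal{W}(t)$. By the structural identities a block $\mathcal{W}(t)$ next to a $P$ carries a factor $t^p$, so the leading ($k=1$) contribution is $-P\mathcal{W}(t)G(0) - G(0)\mathcal{W}(t)P = O(|t|^p)$, and the tail is summed by $\|\mathcal{W}(t)(A_0-z)^{-1}\|\le C(p)C_T|t|$. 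Since $|t|\le t_0\le 1/2$ and $p\ge2$, this gives $\|F(t)-P\|\le C(p)C_T|t|^p\le C(p)C_T|t|$, i.e.\ \eqref{1.5zz}.

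For \eqref{1.6zz} I use $A_0 P = 0$, whence $-\frac{1}{2\pi i}\oint_\Gamma z(A_0-z)^{-1}\,dz = 0$ and so $A(t)F(t) = -\frac{1}{2\pi i}\oint_\Gamma z\bigl[(A(t)-z)^{-1}-(A_0-z)^{-1}\bigr]\,dz$; the same substitution now shows the extra factor $z$ kills every term with fewer than two factors $-z^{-1}P$. Counting orders of $t$ (a $\mathcal{W}(t)$-block with $P$ on one side gives $t^p$, with $P$ on both sides gives $t^{2p}$, an interior block gives $t$) one verifies that the only surviving terms of order $\le|t|^{2p}$ are $P\mathcal{W}(t)P$ (from $k=1$) and $-P\mathcal{W}(t)G(0)\mathcal{W}(t)P$ (from $k=2$), all others being $O(|t|^{2p+1})$. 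Now $P\mathcal{W}(t)P = t^{2p}PX_p^*X_pP$ exactly, and since $\mathcal{W}(t)\omega = t^p X_0^*X_p\omega + O(|t|^{p+1})$ and $G(0)X_0^*X_p\omega = -Z\omega$, one gets $-P\mathcal{W}(t)G(0)\mathcal{W}(t)P = t^{2p}PX_p^*X_0 ZP + O(|t|^{2p+1})$; adding these and using $X_p + X_0 Z = R$ on $\mathfrak{N}$ together with $S = R^*R = PX_p^*RP$ (which follows from $R = P_*X_p|_{\mathfrak{N}}$ and $\operatorname{Ran}X_0\perp\mathfrak{N}_* = \operatorname{Ker}X_0^*$), the cross-terms combine exactly into $t^{2p}SP$, so $P\mathcal{W}(t)P - P\mathcal{W}(t)G(0)\mathcal{W}(t)P = t^{2p}SP + O(|t|^{2p+1})$. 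Collecting the pieces proves \eqref{1.6zz}; tracking constants through the geometric tail (each $\mathcal{W}(t)$ contributing $\sim C_0$ or $\|X_p\|$, each $G$ contributing $\sim\delta^{-1}$, with $\max\{C_0^2,\|X_p\|^2\delta^{-1}\}\le C_T$) produces the power $C_3 = C(p)C_T^{2p+1}$.

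The main obstacle is the combinatorial bookkeeping of the last step: organizing, for each $k$, the $2^{k+1}$ ways of resolving $[(A_0-z)^{-1}\mathcal{W}(t)]^k(A_0-z)^{-1}$ into choices from $\{-z^{-1}P,\,G(z)\}$, checking that after integration precisely the two announced blocks survive at order $|t|^{2p}$ while every other surviving block is $O(|t|^{2p+1})$, and extracting exactly the right power of $C_T = pC_0^2 + \|X_p\|^2\delta^{-1}$ from the remainder (here $t_0\le\delta^{1/2}C_1^{-1}$ is what keeps the series convergent and the geometric ratio below $1$). A secondary, purely technical point is to give rigorous meaning to $\mathcal{W}(t)$ and to the symbol $X_0^*X_p\omega$ when $X_p\omega\notin\operatorname{Dom}X_0^*$: one works throughout with the sesquilinear form $\langle X(t)\cdot,X(t)\cdot\rangle_{\mathfrak{H}_*}$ on $\operatorname{Dom}X_0$ and the associated scale of Hilbert spaces, with $G(0) = (A_0|_{\mathfrak{N}^\perp})^{-1}$ acting boundedly between them.
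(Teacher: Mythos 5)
The statement you are proving is not actually proved inside the paper; it is quoted with a citation to Veniaminov \cite{Ven} and Kukushkin--Suslina \cite{KuSu}. Your contour-integral route (Riesz projections on a circle of radius $2\delta$, second resolvent identity, splitting $(A_0-z)^{-1}=-z^{-1}P+G(z)$, and tracking which blocks survive the residue calculus) is exactly in the spirit of that literature and goes back to Birman--Suslina's $p=1$ argument; the leading algebraic identifications you extract are correct. In particular, the $k=1$ contribution to $A(t)F(t)$ is $P\mathcal{W}(t)P=t^{2p}PX_p^*X_pP$; the $k=2$ contribution $-P\mathcal{W}(t)G(0)\mathcal{W}(t)P$ gives $t^{2p}PX_p^*X_0ZP+O(|t|^{2p+1})$ via $G(0)X_0^*X_p\omega=-Z\omega$; and $PX_p^*(X_p+X_0Z)P=PX_p^*RP=SP$ because $\operatorname{Ran}X_0\perp\mathfrak{N}_*$. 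So the skeleton is right.

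Two genuine gaps remain. First, the Neumann series
$\sum_{k\geqslant0}(-1)^k\bigl[(A_0-z)^{-1}\mathcal{W}(t)\bigr]^k(A_0-z)^{-1}$
is not a legitimate operator series on $\mathfrak{H}$: $\mathcal{W}(t)$ is only a form perturbation of $A_0$, so $\mathcal{W}(t)(A_0-z)^{-1}$ is not a bounded operator from $\mathfrak{H}$ to $\mathfrak{H}$ (it maps into the form dual of $\operatorname{Dom}X_0$); you acknowledge this at the end but it is not a ``purely technical'' point --- it changes which norm is being contracted. More seriously, the justification of convergence fails: from $t_0\leqslant\delta^{1/2}C_1^{-1}$ with $C_1=\max\{(p-1)C_0,\|X_p\|\}$, one only gets $C_T\,t_0\gtrsim C_0\delta^{1/2}$ when $C_1=(p-1)C_0$, and this is \emph{not} bounded by a universal constant, so ``$\|\mathcal{W}(t)(A_0-z)^{-1}\|\leqslant C(p)C_T|t|$'' does not give a ratio below $1$ for $|t|\leqslant t_0$. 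The right move (and what the cited proofs do) is to iterate the \emph{exact} second resolvent identity a fixed finite number of times, keeping the full resolvent $(A(t)-z)^{-1}$ in the remainder and using the already-cited spectral separation bound $\|(A(t)-z)^{-1}\|\leqslant\delta^{-1}$ on $\Gamma$; no infinite series and hence no convergence issue arises, and the remainder is finite-order in $t$ with constants tracked directly.

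Second, the ``combinatorial bookkeeping'' you name as the main obstacle is, for this proposition, the bulk of the work: one has to enumerate which of the $\binom{k+1}{\ell}$ patterns of $\{-z^{-1}P,G(z)\}$-slots survive the $z$-integration, assign each a $t$-power using the rule $P$-adjacent $\mathcal{W}$-block $\mapsto t^p$ and interior block $\mapsto t$, and bound the total with the right power of $C_T$. You do not carry this out, and your resulting constant for \eqref{1.5zz}, $C(p)C_T|t|^p$, is even sharper in $C_T$ than what Proposition~\ref{prop1.1a} records ($C_4=C(p)C_T^p$), which should make you suspicious of the constant accounting. As written, the proposal is a correct roadmap consistent with the cited references, but not a complete proof: you need to replace the Neumann series by a finite two-sided iteration of the resolvent identity in the form sense and actually perform the block enumeration and constant bookkeeping.
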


More accurate threshold approximations were found in 
the recent paper \cite[Theorem 3.2]{SlSu2}. 

\begin{proposition}
\label{prop1.1a}
Suppose that Conditions {\rm\ref{sup1}}, {\rm\ref{sup2}}, and {\rm\ref{sup3}} are satisfied. Let
\begin{equation}
\label{op_G}
G:= (RP)^*  X_1 Z  + (X_1 Z)^* RP.
\end{equation}
In terms of the expansions \eqref{1.8z}, \eqref{1.9z},
$$
G = \sum_{j=1}^n \mu_j (\cdot, \omega_j)_{\mathfrak{H}} \omega_j + 
\sum_{j=1}^n \gamma_j \left( (\cdot, \varphi_j^{(1)})_{\mathfrak{H}} \omega_j + 
(\cdot, \omega_j)_{\mathfrak{H}} \varphi_j^{(1)} \right).
$$ 
Then for 
$|t| \leqslant t_0$ we have
\begin{align}
\label{1.5yy}
\| F(t) - P\| &\leqslant C_4 |t|^p, \quad C_4 = C(p) C_T^p,
\\
\label{1.6yy}
\| A(t)F(t) - t^{2p} S P - t^{2p+1} G\| &\leqslant C_5 t^{2p+2}, \quad C_5 = C(p) C_T^{2p+2}.
\end{align}
\end{proposition}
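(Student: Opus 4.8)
The plan is to prove both estimates by the contour‑integral (resolvent) method, exploiting a structural consequence of Condition~\ref{sup2}: applied to $u\in\mathfrak N$ it forces $X_j u=0$ for $j=0,1,\dots,p-1$ (indeed $\|X_j u\|_{\mathfrak H_*}\leqslant C_0\|X_0u\|_{\mathfrak H_*}=0$). Writing $A(t)-A_0=\sum_{l=1}^{2p}t^l A^{(l)}$ with $A^{(l)}=\sum_{i+j=l,\,0\leqslant i,j\leqslant p}X_i^*X_j$, this gives $PX_j^*=X_jP=0$ for $0\leqslant j\leqslant p-1$, hence $A^{(l)}P=PA^{(l)}=0$ for $1\leqslant l\leqslant p-1$, $A^{(p)}P=X_0^*X_pP$, $PA^{(p)}=PX_p^*X_0$, $A^{(p+1)}P=X_1^*X_pP$, and $PA^{(l)}P=0$ for $1\leqslant l\leqslant 2p-1$. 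I would fix the contour $\gamma=\{\zeta\in\mathbb C:|\zeta|=2\delta\}$; by the choice of $\delta$ and \cite[Proposition~3.10]{Ven}, for $|t|\leqslant t_0$ the operator $A(t)$ has no spectrum on $\gamma$ and exactly its $n$ small eigenvalues inside, so $F(t)=-\tfrac1{2\pi i}\oint_\gamma(A(t)-\zeta I)^{-1}\,d\zeta$ and $A(t)F(t)=-\tfrac1{2\pi i}\oint_\gamma\zeta(A(t)-\zeta I)^{-1}\,d\zeta$. Expanding $(A(t)-\zeta)^{-1}=\sum_{k\geqslant0}(A_0-\zeta)^{-1}\bigl[-(A(t)-A_0)(A_0-\zeta)^{-1}\bigr]^k$ and using the standard bounds $\|X_j(A_0-\zeta)^{-1}\|\leqslant C_0\delta^{-1/2}$ $(j\leqslant p-1)$, $\|X_p(A_0-\zeta)^{-1}\|\leqslant\|X_p\|(2\delta)^{-1}$, $\|(A_0-\zeta)^{-1}\|\leqslant(2\delta)^{-1}$ on $\gamma$, the series converges for $|t|\leqslant t_0$ and all tails are controlled geometrically through $C_T$.

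For \eqref{1.5yy} I would substitute this into $F(t)-P=-\tfrac1{2\pi i}\oint_\gamma\bigl[(A(t)-\zeta)^{-1}-(A_0-\zeta)^{-1}\bigr]\,d\zeta=\sum_{l\geqslant1}t^l\Phi^{(l)}$; each $\Phi^{(l)}$ is a finite sum of residues at $\zeta=0$ of integrands $(A_0-\zeta)^{-1}A^{(l_1)}(A_0-\zeta)^{-1}\cdots A^{(l_k)}(A_0-\zeta)^{-1}$ with $l_1+\dots+l_k=l$. Decomposing $(A_0-\zeta)^{-1}=-\zeta^{-1}P+\mathcal R(\zeta)$, where $\mathcal R(\zeta)=(A_0-\zeta)^{-1}(I-P)$ is holomorphic inside $\gamma$, every factor $-\zeta^{-1}P$ is annihilated by an adjacent $A^{(l_i)}$ with $l_i\leqslant p-1$ (at the two ends one would need $PA^{(l_1)}\neq0$, resp.\ $A^{(l_k)}P\neq0$). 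When $l\leqslant p-1$ every $l_i\leqslant l\leqslant p-1$, so every $-\zeta^{-1}P$ factor dies, the integrand is holomorphic, and $\Phi^{(l)}=0$. Hence $F(t)-P=t^p\Phi^{(p)}+t^{p+1}\Phi^{(p+1)}+\cdots$, and summing the geometric tail yields $\|F(t)-P\|\leqslant C(p)C_T^p|t|^p$.

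For \eqref{1.6yy} I would treat $A(t)F(t)=\sum_{l\geqslant1}t^l\Psi^{(l)}$ the same way, noting that the extra $\zeta$ in $-\tfrac1{2\pi i}\oint_\gamma\zeta(A(t)-\zeta)^{-1}\,d\zeta$ means a multi‑index $(l_1,\dots,l_k)$ contributes only when its integrand has a pole of order $\geqslant2$ at $\zeta=0$, i.e.\ at least two surviving $-\zeta^{-1}P$ factors. A short combinatorial check of the survival rules — an interior $P$ needs both neighbouring indices $\geqslant p$, a boundary $P$ needs the adjacent index $\geqslant p$, and two surviving $P$'s flanking a single $A^{(l)}$ with $l\leqslant 2p-1$ die because $PA^{(l)}P=0$ — forces $l_1+\dots+l_k\geqslant2p$, so $\Psi^{(l)}=0$ for $l\leqslant2p-1$. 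For $l=2p$ only the $k=1$ term with $l_1=2p$ (giving $PA^{(2p)}P=PX_p^*X_pP$) and the $k=2$ term $l_1=l_2=p$ (pattern $P\,A^{(p)}\,\mathcal R(0)\,A^{(p)}\,P$, giving $-PA^{(p)}\mathcal R(0)A^{(p)}P$) survive; using $X_0(A_0|_{\mathfrak N^\perp})^{-1}X_0^*=I-P_*$ these combine to $\Psi^{(2p)}=PX_p^*P_*X_pP=R^*RP=SP$. For $l=2p+1$ the contributing multi‑indices are $k=2$ with $(p,p+1)$ and $(p+1,p)$ and $k=3$ with $(p,1,p)$; evaluating the residues and rewriting everything via $A_0(Z\omega)=-X_0^*X_p\omega$, $X_0(Z\omega)+X_p\omega=R\omega$, $RP=P_*X_pP$, and $A^{(1)}=X_0^*X_1+X_1^*X_0$, the spurious $(I-P_*)$–pieces from $k=2$ and $k=3$ cancel and one gets $\Psi^{(2p+1)}=(RP)^*X_1Z+(X_1Z)^*RP=G$. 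The tail $\sum_{l\geqslant2p+2}t^l\Psi^{(l)}$ is then bounded by the geometric estimate, giving $\|A(t)F(t)-t^{2p}SP-t^{2p+1}G\|\leqslant C(p)C_T^{2p+2}\,t^{2p+2}$. The agreement of this $G$ with the formula in terms of $\mu_j,\gamma_j,\omega_j,\varphi_j^{(1)}$ follows by matching the $t^{2p+1}$–coefficient of $A(t)F(t)=\sum_{j=1}^n\lambda_j(t)(\cdot,\varphi_j(t))_{\mathfrak H}\varphi_j(t)$ expanded via \eqref{1.8z}, \eqref{1.9z} (valid for $|t|\leqslant t_*$, which suffices to identify the constant operator $G$), using that $A^{(1)}P=0$ forces $(I-P)\varphi_j^{(1)}=0$.

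The main obstacle is the bookkeeping in the third paragraph: enumerating exactly which residue terms feed the $t^{2p}$– and $t^{2p+1}$–coefficients and then checking, through the indirectly defined operators $Z$, $R$, $S$, that they assemble precisely into $SP$ and $G$ (in particular, that the off‑diagonal $(I-P_*)$ contributions cancel). The remaining ingredients — the contour set‑up, the vanishing $\Phi^{(l)}=0$ for $l\leqslant p-1$, and the norm bookkeeping yielding $C_4=C(p)C_T^p$ and $C_5=C(p)C_T^{2p+2}$ — are routine adaptations of the Birman–Suslina threshold machinery.
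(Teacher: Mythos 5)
The paper does not prove Proposition~\ref{prop1.1a} itself; it cites \cite[Theorem~3.2]{SlSu2}, whose title (``Threshold approximations for the resolvent of a polynomial nonnegative operator pencil'') indicates precisely the resolvent/contour-integral method you employ, so your route is essentially the one behind the paper's reference. Your survival rules for the $-\zeta^{-1}P$ factors and the resulting vanishing $\Phi^{(l)}=0$ for $l\leqslant p-1$ and $\Psi^{(l)}=0$ for $l\leqslant 2p-1$ are correct; so is the identification of the three contributing multi-indices $(p,p+1)$, $(p+1,p)$, $(p,1,p)$ at order $t^{2p+1}$. The ``main obstacle'' you flag does work out: using $\mathcal R(0)X_0^*X_pP=-Z$ and $X_0Z=(P_*-I)X_pP$, the $(p,p+1)$- and $(p+1,p)$-residues contribute $(X_1Z)^*X_pP$ and $PX_p^*X_1Z$, while the $(p,1,p)$-residue contributes $-(X_1Z)^*(I-P_*)X_pP - PX_p^*(I-P_*)X_1Z$, and the sum collapses to $(X_1Z)^*RP+(RP)^*X_1Z=G$.
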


\subsection{Approximation for $e^{-i\tau A(t)}$}

\begin{proposition}\label{prop_est1}
Denote
\begin{equation}
\label{J_def}
J(t,\tau) := \left(e^{-i\tau A(t)} - e^{-i \tau t^{2p} SP}\right) P.
\end{equation}
For $\tau \in \mathbb{R}$ and $|t| \leqslant t_0$ we have
\begin{equation}
\label{exp_est3}
\left\|  J(t,\tau) \right\| \leqslant 2 C_2 |t| + C_3 |\tau| |t|^{2p+1}.
\end{equation}
\end{proposition}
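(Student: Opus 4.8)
The plan is to peel off the spectral projection $F(t)$ of $A(t)$ associated with the low-energy interval $[0,\delta]$ and to handle the two resulting pieces separately. Writing $P = F(t) + (P - F(t))$ in the definition \eqref{J_def}, I would decompose
\begin{equation*}
J(t,\tau) = \bigl(e^{-i\tau A(t)} - e^{-i\tau t^{2p} SP}\bigr) F(t) + \bigl(e^{-i\tau A(t)} - e^{-i\tau t^{2p} SP}\bigr)(P - F(t)).
\end{equation*}
Since $A(t)$ is selfadjoint and $SP$ is bounded and selfadjoint, the two operator exponentials are unitary, so the last term is bounded in norm by $2\,\|P - F(t)\|$, which does not exceed $2 C_2 |t|$ by the threshold estimate \eqref{1.5zz} of Proposition \ref{prop1.1}.

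For the remaining term I would use that $F(t)$, being a spectral projection of $A(t)$, commutes with $A(t)$ and that $A(t) F(t)$ is a bounded selfadjoint operator; the spectral theorem then yields the functional-calculus identity $e^{-i\tau A(t)} F(t) = e^{-i\tau A(t) F(t)} F(t)$. Hence the first term equals $\bigl(e^{-i\tau A(t) F(t)} - e^{-i\tau t^{2p} SP}\bigr) F(t)$, and the task reduces to comparing the exponentials of the two bounded selfadjoint operators $B_1 := A(t) F(t)$ and $B_2 := t^{2p} SP$.

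That comparison is a standard Duhamel estimate: differentiating $s \mapsto e^{-i(\tau - s) B_1} e^{-i s B_2}$ and integrating over $[0,\tau]$ gives
\begin{equation*}
e^{-i\tau B_1} - e^{-i\tau B_2} = -\,i \int_0^\tau e^{-i(\tau - s) B_1} (B_1 - B_2)\, e^{-i s B_2}\, ds,
\end{equation*}
and since exponentials of selfadjoint operators have operator norm one, $\|e^{-i\tau B_1} - e^{-i\tau B_2}\| \leqslant |\tau|\,\|B_1 - B_2\|$. By the threshold approximation \eqref{1.6zz}, $\|B_1 - B_2\| = \|A(t) F(t) - t^{2p} SP\| \leqslant C_3 |t|^{2p+1}$, so (using $\|F(t)\| = 1$) the first term is bounded by $C_3 |\tau|\, |t|^{2p+1}$. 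Adding the two bounds produces exactly \eqref{exp_est3}.

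I do not expect a serious obstacle: both Propositions \ref{prop1.1} and the Duhamel inequality are available off the shelf, and the argument is essentially bookkeeping. The one point that needs a line of care is the unbounded functional calculus identity $e^{-i\tau A(t)} F(t) = e^{-i\tau A(t) F(t)} F(t)$, which follows by applying the spectral theorem to $A(t)$ separately on the finite-dimensional subspace $F(t)\mathfrak{H}$ and on its orthogonal complement.
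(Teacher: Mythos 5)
Your proof is correct and follows essentially the same approach as the paper: decompose with respect to $F(t)$, bound the piece on $(P-F(t))\mathfrak{H}$ by $2\|F(t)-P\|\leqslant 2C_2|t|$ via \eqref{1.5zz}, and use a Duhamel-type estimate together with \eqref{1.6zz} on the piece on $F(t)\mathfrak{H}$. The paper obtains the Duhamel bound by differentiating the conjugated operator $\Sigma(t,\tau)=e^{i\tau t^{2p}SP}F(t)e^{-i\tau A(t)}-P$ in $\tau$ rather than invoking the reduction $e^{-i\tau A(t)}F(t)=e^{-i\tau A(t)F(t)}F(t)$, but this is only a difference in bookkeeping.
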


\begin{proof}
We put $E(t,\tau) := e^{-i\tau A(t)} F(t) - e^{-i \tau t^{2p} SP} P$,
$$
\Sigma(t,\tau) :=  e^{i \tau t^{2p} SP} E(t,\tau) = e^{i \tau t^{2p} SP} F(t) e^{-i\tau A(t)} - P.
$$
 Obviously,
\begin{equation}
\label{J_le_E}
\| J(t, \tau)\| \leqslant \| E(t,\tau)\| +  \| F(t) - P\|.
\end{equation}
We have $\Sigma(t,0) = F(t)-P$ and
$$
\Sigma'(t,\tau):= \frac{d \Sigma(t,\tau)}{d\tau} = i e^{i \tau t^{2p} SP} \left(  t^{2p} SP - A(t) F(t) \right) F(t) e^{-i\tau A(t)}.
$$
Since $\Sigma(t,\tau) = \Sigma(t,0) + \int_0^\tau \Sigma'(t,\rho)\, d\rho$, then
\begin{equation}
\label{E(tau)_est}
\| E(t,\tau)\| = \|\Sigma(t,\tau)\| \leqslant \| F(t) - P\| + |\tau| \left\|t^{2p} SP - A(t) F(t)\right\|.
\end{equation}
Combining this with  \eqref{1.5zz}, \eqref{1.6zz}, and \eqref{J_le_E}, we arrive at the required estimate   \eqref{exp_est3}.
\end{proof}

In the case where $G=0$, the result can be improved. 

\begin{proposition}\label{prop_est1_improved}
Let $G$ be the operator \eqref{op_G}. Suppose that $G=0$. Let $J(t,\tau)$ be the operator \eqref{J_def}.
Then for $\tau \in \mathbb{R}$ and $|t| \leqslant t_0$ we have
\begin{equation}
\label{exp_est3_improved}
\left\|  J(t,\tau) \right\| \leqslant 2 C_4 |t|^p + C_5 |\tau| t^{2p+2}.
\end{equation}
\end{proposition}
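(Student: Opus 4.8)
The plan is to repeat the argument of Proposition~\ref{prop_est1} almost verbatim, the only change being that the crude threshold approximations of Proposition~\ref{prop1.1} are replaced by the sharper ones of Proposition~\ref{prop1.1a}, which are available precisely because $G=0$. The structure of the estimate — a Duhamel-type identity for a conjugated operator — does not care about the orders of the remainders, so no new idea is required.

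Concretely, I would first introduce, exactly as before, the auxiliary operators
$$
E(t,\tau) := e^{-i\tau A(t)} F(t) - e^{-i \tau t^{2p} SP} P, \qquad
\Sigma(t,\tau) := e^{i \tau t^{2p} SP} F(t) e^{-i\tau A(t)} - P,
$$
so that $J(t,\tau) - E(t,\tau) = e^{-i\tau A(t)}\bigl(P - F(t)\bigr)$ and hence
$\| J(t,\tau)\| \leqslant \| E(t,\tau)\| + \| F(t) - P\|$. Then, as in the proof of Proposition~\ref{prop_est1}, one has $\Sigma(t,0) = F(t) - P$ and
$$
\Sigma'(t,\tau) = i e^{i \tau t^{2p} SP} \bigl( t^{2p} SP - A(t) F(t) \bigr) F(t) e^{-i\tau A(t)},
$$
using $A(t)F(t) = F(t)A(t)$. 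Since $SP$ is selfadjoint on $\mathfrak{H}$ (it acts as $S$ on $\mathfrak{N}$ and as $0$ on $\mathfrak{N}^\perp$), the factor $e^{i\tau t^{2p} SP}$ is unitary, as is $e^{-i\tau A(t)}$; integrating $\Sigma' $ from $0$ to $\tau$ therefore gives
$$
\| E(t,\tau)\| = \|\Sigma(t,\tau)\| \leqslant \| F(t) - P\| + |\tau|\, \bigl\| t^{2p} SP - A(t) F(t) \bigr\|.
$$

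The only new input is then to substitute the sharper bounds. Because $G=0$, estimate~\eqref{1.6yy} reads $\| A(t) F(t) - t^{2p} S P\| \leqslant C_5 t^{2p+2}$ for $|t|\leqslant t_0$, while \eqref{1.5yy} gives $\| F(t) - P\| \leqslant C_4 |t|^p$. Plugging these into the last display yields $\| E(t,\tau)\| \leqslant C_4 |t|^p + C_5 |\tau|\, t^{2p+2}$, and combining with $\| J(t,\tau)\| \leqslant \| E(t,\tau)\| + \| F(t) - P\|$ produces exactly~\eqref{exp_est3_improved}. I do not expect any real obstacle: the entire difficulty has been packaged into Proposition~\ref{prop1.1a}, and the rest is the same elementary Duhamel estimate already used for Proposition~\ref{prop_est1}; the one point deserving an explicit line is the selfadjointness of $SP$, which guarantees that the exponential $e^{i\tau t^{2p}SP}$ is unitary and may be discarded from the operator norm.
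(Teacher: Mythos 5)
Your proof is correct and follows essentially the same route as the paper: the paper's proof of this proposition simply cites the inequalities \eqref{J_le_E} and \eqref{E(tau)_est} established in the proof of Proposition~\ref{prop_est1}, substitutes the improved threshold bounds \eqref{1.5yy} and \eqref{1.6yy}, and uses $G=0$, which is exactly the argument you spell out (including the small side remark on the selfadjointness of $SP$, which the paper leaves implicit).
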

 
\begin{proof}
Estimate \eqref{exp_est3_improved} follows from \eqref{1.5yy}, \eqref{1.6yy},  \eqref{J_le_E}, \eqref{E(tau)_est} and 
the condition \hbox{$G=0$}.
\end{proof}

\subsection{Approximation for the operator $\exp (-i \tau \varepsilon^{-2p} A(t))$}
Let $\varepsilon >0$. We study the behavior of the operator $\exp (-i \tau \varepsilon^{-2p} A(t))$ 
for $\tau \in \mathbb{R}$ and small $\varepsilon$.  
Let us estimate the operator $J(t, \tau \varepsilon^{-2p} )$ multiplied by 
the ``smoothing factor'' $\varepsilon^{s} (t^2 + \varepsilon^{2})^{-s/2}$ 
with $s = 2p+1$. (In applications to DOs, such multiplying turns into smoothing.)

\begin{thm}\label{th_est3}
Let $J(t,\tau)$ be the operator \eqref{J_def}.
For $\tau \in \mathbb{R}$, $\varepsilon >0$, and $|t| \leqslant t_0$ we have
\begin{equation}
\label{exp_est1}
\left\|  J(t, \tau \varepsilon^{-2p}) \right\| \frac{\varepsilon^{2p+1}}{ (t^2 + \varepsilon^{2})^{p+1/2}}
\leqslant ( C_2  + C_3 |\tau|) \varepsilon.
\end{equation}
\end{thm}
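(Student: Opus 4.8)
The plan is to derive \eqref{exp_est1} directly from the bound \eqref{exp_est3} of Proposition~\ref{prop_est1}, together with two elementary inequalities for the smoothing factor; no case distinction in $t$ is needed. First I would apply \eqref{exp_est3} with $\tau$ replaced by $\tau\varepsilon^{-2p}$. Since $\varepsilon>0$, this yields, for $|t|\leqslant t_0$,
\begin{equation*}
\left\| J(t,\tau\varepsilon^{-2p}) \right\| \leqslant 2 C_2 |t| + C_3 |\tau|\, \varepsilon^{-2p} |t|^{2p+1}.
\end{equation*}
Multiplying by $\varepsilon^{2p+1}(t^2+\varepsilon^2)^{-(p+1/2)}$, the claim reduces to the two pointwise estimates
\begin{equation*}
\frac{2 C_2 |t|\, \varepsilon^{2p+1}}{(t^2+\varepsilon^2)^{p+1/2}} \leqslant C_2\, \varepsilon,
\qquad
\frac{C_3 |\tau|\, |t|^{2p+1}\, \varepsilon}{(t^2+\varepsilon^2)^{p+1/2}} \leqslant C_3 |\tau|\, \varepsilon,
\end{equation*}
whose sum is exactly the right-hand side of \eqref{exp_est1}.

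The second estimate is immediate: $t^2\leqslant t^2+\varepsilon^2$ gives $|t|^{2p+1}=(t^2)^{p+1/2}\leqslant (t^2+\varepsilon^2)^{p+1/2}$. For the first, it suffices to prove the elementary bound $2|t|\,\varepsilon^{2p}\leqslant (t^2+\varepsilon^2)^{p+1/2}$, that is, after squaring, $4 t^2\varepsilon^{4p}\leqslant (t^2+\varepsilon^2)^{2p+1}$. Here I would use the classical inequalities $t^2+\varepsilon^2\geqslant 2|t|\varepsilon$ and $t^2+\varepsilon^2\geqslant \varepsilon^2$, splitting the power as
\begin{equation*}
(t^2+\varepsilon^2)^{2p+1} = (t^2+\varepsilon^2)^{2}\,(t^2+\varepsilon^2)^{2p-1} \geqslant (2|t|\varepsilon)^{2}\,(\varepsilon^2)^{2p-1} = 4 t^2\varepsilon^{4p},
\end{equation*}
which is legitimate because $2p-1\geqslant 1$ (recall $p\geqslant 2$). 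Adding the two bounds completes the argument.

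There is no genuine obstacle; the only point requiring care is the bookkeeping of constants. A naive use of \eqref{exp_est3} produces the factor $2C_2$ in front of $|t|$, whereas \eqref{exp_est1} asserts the sharp constant $C_2$; the step $(t^2+\varepsilon^2)^{2}\geqslant (2|t|\varepsilon)^{2}$ is precisely what lets the smoothing factor absorb the extra factor $2$.
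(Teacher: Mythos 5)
Your proposal is correct and follows exactly the paper's route: substitute $\tau\mapsto\tau\varepsilon^{-2p}$ in \eqref{exp_est3} and then bound the two terms of the resulting product against the smoothing factor. The paper compresses the elementary estimate into one step without spelling out the inequality $2|t|\,\varepsilon^{2p}\leqslant(t^2+\varepsilon^2)^{p+1/2}$; your justification of it (squaring and splitting the power, or equivalently writing $(t^2+\varepsilon^2)^{p+1/2}\geqslant(t^2+\varepsilon^2)\,(\varepsilon^2)^{p-1/2}\geqslant 2|t|\varepsilon\cdot\varepsilon^{2p-1}$) is exactly the missing bookkeeping.
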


\begin{proof}
From \eqref{exp_est3} with $\tau$ replaced by $\tau \varepsilon^{-2p}$ it follows that 
$$
\begin{aligned}
\left\|  J( t, \tau \varepsilon^{-2p}) \right\|  \frac{\varepsilon^{2p+1}}{ (t^2 + \varepsilon^{2})^{p+1/2}}
&\leqslant \left(2 C_2 |t| + C_3 |\tau| \varepsilon^{-2p} |t|^{2p+1}\right)
\frac{\varepsilon^{2p+1}}{ (t^2 + \varepsilon^{2})^{p+1/2}}
\\
&\leqslant (C_2 + C_3 |\tau|) \varepsilon. 
\end{aligned}
$$
\end{proof}

In the case where $G=0$, this result can be improved.

\begin{thm}\label{th_est3_improved}
Let $J(t,\tau)$ be the operator \eqref{J_def}.
Let $G$ be the operator \eqref{op_G}. Suppose that $G=0$. Then
for $\tau \in \mathbb{R}$, $\varepsilon >0$, and $|t| \leqslant t_0$ we have
\begin{equation}
\label{exp_est1_improved5}
\left\|  J(t, \tau \varepsilon^{-2p}) \right\| \frac{\varepsilon^{2p+2}}{ (t^2 + \varepsilon^{2})^{p+1}}
\leqslant \left( 2 C_4  t_0^{p-2} + C_5 |\tau| \right) \varepsilon^2.
\end{equation}
\end{thm}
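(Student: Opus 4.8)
The plan is to follow verbatim the structure of the proof of Theorem~\ref{th_est3}, the only change being that, since we now assume $G=0$, we may feed in the sharper threshold bound \eqref{exp_est3_improved} from Proposition~\ref{prop_est1_improved} in place of the cruder \eqref{exp_est3}. So the first step is to take the estimate
$\left\| J(t,\tau) \right\| \leqslant 2 C_4 |t|^p + C_5 |\tau|\, t^{2p+2}$
and substitute $\tau \mapsto \tau \varepsilon^{-2p}$, obtaining
$\left\| J(t,\tau\varepsilon^{-2p}) \right\| \leqslant 2 C_4 |t|^p + C_5 |\tau|\, \varepsilon^{-2p} t^{2p+2}$
for $|t|\leqslant t_0$ and all $\tau\in\mathbb{R}$, $\varepsilon>0$.

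The second step is to multiply through by the smoothing factor $\varepsilon^{2p+2}(t^2+\varepsilon^2)^{-(p+1)}$ and estimate the two resulting terms separately. For the term coming from $C_5 |\tau|\,\varepsilon^{-2p} t^{2p+2}$, we note that $t^{2p+2}=(t^2)^{p+1}\leqslant (t^2+\varepsilon^2)^{p+1}$, so that term is bounded by $C_5 |\tau|\,\varepsilon^2$. For the term coming from $2C_4|t|^p$, I would use $p\geqslant 2$ to write $|t|^p = |t|^{p-2}\cdot t^2$ and $\varepsilon^{2p+2}=\varepsilon^{2p}\cdot\varepsilon^2$, so that this term equals $2C_4|t|^{p-2}\varepsilon^2\cdot \dfrac{t^2\varepsilon^{2p}}{(t^2+\varepsilon^2)^{p+1}}$; since $t^2\leqslant t^2+\varepsilon^2$ and $\varepsilon^{2p}\leqslant (t^2+\varepsilon^2)^p$, the last fraction is $\leqslant 1$, and then $|t|^{p-2}\leqslant t_0^{p-2}$ for $|t|\leqslant t_0$. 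Adding the two bounds yields exactly $\left(2C_4 t_0^{p-2}+C_5|\tau|\right)\varepsilon^2$, which is \eqref{exp_est1_improved5}.

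I do not anticipate any real obstacle here: the argument is a short scaling computation identical in spirit to Theorem~\ref{th_est3}. The only point that deserves a line of justification is the elementary inequality $t^2\varepsilon^{2p}\leqslant (t^2+\varepsilon^2)^{p+1}$ used to absorb the smoothing factor in the first term, together with the observation that $p\geqslant 2$ makes $|t|^{p-2}$ a genuine nonnegative power (for $p=2$ it is simply the constant $1$, and $t_0^{p-2}$ then equals $1$). Everything else is routine.
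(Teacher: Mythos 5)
Your proposal is correct and is essentially the paper's own proof: it substitutes $\tau \mapsto \tau\varepsilon^{-2p}$ into the bound \eqref{exp_est3_improved}, multiplies by the smoothing factor, and then estimates each of the two terms using $t^{2p+2}\leqslant (t^2+\varepsilon^2)^{p+1}$ and, for the $C_4$ term, the factorization $|t|^p\varepsilon^{2p+2}=|t|^{p-2}\varepsilon^2\cdot t^2\varepsilon^{2p}$ together with $|t|\leqslant t_0$ and $p\geqslant 2$. You have simply made explicit the elementary inequalities that the paper leaves implicit in the displayed chain.
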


\begin{proof}
Estimate \eqref{exp_est1_improved5} follows from \eqref{exp_est3_improved} with $\tau$ replaced by
$\tau \varepsilon^{-2p}$:
\begin{equation*}
\begin{aligned}
\left\|  J(t, \tau \varepsilon^{-2p}) \right\| \frac{\varepsilon^{2p+2}}{ (t^2 + \varepsilon^{2})^{p+1}}
&\leqslant \left( 2 C_4 |t|^p + C_5 |\tau| \varepsilon^{-2p} t^{2p+2} \right)  
\frac{\varepsilon^{2p+2}}{ (t^2 + \varepsilon^{2})^{p+1}}
\\
&\leqslant 2 C_4 t_0^{p-2} \varepsilon^2 + C_5  |\tau|  \varepsilon^{2}.
\end{aligned}
\end{equation*}
We took into account that $p \geqslant 2$ and $|t| \leqslant t_0$.
\end{proof}

\section{Periodic differential operators in $L_2(\mathbb{R}^d;\mathbb{C}^n)$}

\subsection{Lattices. The Gelfand transformation\label{Gelf_trans}}

Let $\mathbf{a}_1, \dots, \mathbf{a}_d$ be a basis in $\mathbb{R}^d$ generating the lattice $\Gamma$:
$$
\Gamma = \Bigl\{ \mathbf{a} \in \mathbb{R}^d: \  \mathbf{a} = \sum_{j=1}^d l_j {\mathbf a}_j,\  l_j \in \mathbb{Z} \Bigr\},
$$
and let $\Omega \subset \mathbb{R}^d$ be the elementary cell of $\Gamma$:
$$
\Omega = \Bigl\{ \mathbf{x} \in \mathbb{R}^d: \, \mathbf{x} = \sum_{j=1}^d \xi_j {\mathbf a}_j,\, 0< \xi_j < 1 \Bigr\}.
$$
The basis $\mathbf{b}_1, \dots, \mathbf{b}_d$ in $\mathbb{R}^d$ dual to $\mathbf{a}_1, \dots, \mathbf{a}_d$ is defined by the relations $\langle \mathbf{b}_i, \mathbf{a}_j \rangle = 2\pi \delta_{ij}$. This basis generates  the lattice 
$\widetilde{\Gamma}$ dual to $\Gamma$.  
Let $\widetilde{\Omega}$ be the central Brillouin zone of $\widetilde{\Gamma}$ given by
$$
\widetilde{\Omega} = \Bigl\{ \mathbf{k} \in \mathbb{R}^d: \, |\mathbf{k}| < |{\mathbf k} - {\mathbf b}|,\, 
0 \ne {\mathbf b} \in \widetilde{\Gamma} \Bigr\}.
$$
We use the notation $|\Omega| = \operatorname{meas} \Omega$, 
$| \widetilde{\Omega} | = \operatorname{meas} \widetilde{\Omega}$. Note that 
$|\Omega| | \widetilde{\Omega} | = (2\pi)^d$.

Let $r_0$ be the radius of the ball inscribed in $\operatorname{clos} \widetilde{\Omega}$. We have 
$
2 r_0 = \min_{0 \ne \mathbf{b} \in \widetilde{\Gamma}} | \mathbf{b} |.
$

Below, $\widetilde{H}^s(\Omega)$ \emph{stands for the subspace of all functions $f \in H^s(\Omega)$ such that 
the $\Gamma$-periodic extension of $f$ to $\mathbb{R}^d$ belongs to} $H^s_{\text{loc}}(\mathbb{R}^d)$.

Initially, the Gelfand transformation $\mathcal{U}$ is defined on the functions $\mathbf{v}$ belonging to the Schwartz class ${\mathcal S}({\mathbb R}^d; \mathbb{C}^n)$ by the formula 
$$
\widetilde{\mathbf v}(\mathbf{k}, \mathbf{x}) = ( {\mathcal U}{\mathbf v})(\mathbf{k}, \mathbf{x})
= | \widetilde{\Omega} |^{-1/2} \sum_{\mathbf{a} \in \Gamma} 
e^{-i \langle {\mathbf k}, {\mathbf x} + {\mathbf a}\rangle} {\mathbf v}(\mathbf{x}+ \mathbf{a}),
\quad {\mathbf x}\in \Omega,\quad {\mathbf k} \in \widetilde{\Omega}.
$$
Then $\mathcal{U}$ extends by continuity up to a \emph{unitary mapping}
\begin{equation}
\label{Gelfand}
{\mathcal U}: L_2(\mathbb{R}^d;{\mathbb C}^n) \to \int_{\widetilde{\Omega}} 
\oplus L_2 (\Omega; \mathbb{C}^n)\, d \mathbf{k} =: \mathcal{K}.
\end{equation}

The relation $\mathbf{v} \in H^p(\mathbb{R}^d;{\mathbb C}^n)$ is equivalent to 
 $\widetilde{\mathbf v} \in L_2( \widetilde{\Omega};\widetilde{H}^p(\Omega; {\mathbb C}^n))$.
Under the transformation $\mathcal{U}$, the operator of multiplication by a bounded $\Gamma$-periodic function
in $L_2(\mathbb{R}^d;{\mathbb C}^n)$ turns into multiplication by the same function on the fibers of the direct integral $\mathcal{K}$ (see \eqref{Gelfand}). The linear DO $b(\mathbf{D})$ of order $p$ applied to 
$\mathbf{v} \in H^p(\mathbb{R}^d;{\mathbb C}^n)$ turns into the operator $b(\mathbf{D}+ \mathbf{k})$
applied to $\widetilde{\mathbf{v}} ({\mathbf k},\cdot) \in \widetilde{H}^p(\Omega;{\mathbb C}^n)$.

\subsection{Factorized operators of order $2p$\label{sec_operator}}

In $L_2(\mathbb{R}^d;{\mathbb C}^n)$, we consider an operator $A$ formally given by 
the differential expression
\begin{equation}
\label{op_A}
A = b( \mathbf{D})^* g(\mathbf{x}) b( \mathbf{D}).
\end{equation}
Here $g(\mathbf{x})$ is a Hermitian $(m \times m)$-matrix-valued function, in general, with complex entries. 
It is assumed that $g(\mathbf{x})$ is $\Gamma$-periodic, bounded, and positive definite: 
\begin{equation}
\label{g_cond}
g, g^{-1} \in L_\infty(\mathbb{R}^d); \quad g(\mathbf{x}) >0.
\end{equation}
The operator $b(\mathbf{D})$ is given by
$b(\mathbf{D}) = \sum_{|\beta | =p} b_\beta \mathbf{D}^\beta$,
where $b_\beta$ are constant $(m \times n)$-matrices, in general, with complex entries.
It is assumed that $m \geqslant n$ and the symbol $b(\boldsymbol{\xi}) =  \sum_{|\beta| =p} 
b_\beta \boldsymbol{\xi}^\beta$ satisfies 
$\operatorname{rank} b(\boldsymbol{\xi}) =n$ for $0 \ne \boldsymbol{\xi} \in {\mathbb R}^d.$
This condition is equivalent to the estimates
\begin{equation}
\label{b^*b}
\alpha_0 {\mathbf 1}_n \leqslant b( \boldsymbol{\theta})^* b(\boldsymbol{\theta}) \leqslant \alpha_1 {\mathbf 1}_n, 
\quad \boldsymbol{\theta} \in \mathbb{S}^{d-1}, 
\quad 0 < \alpha_0 \leqslant \alpha_1 <\infty,
\end{equation}
with some constants $\alpha_0, \alpha_1$.

The precise definition of the operator $A$ is given in terms of the quadratic form
\begin{equation}
\label{form_a}
\mathfrak{a}[\mathbf{u}, \mathbf{u}] = \int_{\mathbb{R}^d} \langle g(\mathbf{x}) b(\mathbf{D}) \mathbf{u}(\mathbf{x}), b(\mathbf{D}) \mathbf{u}(\mathbf{x})\rangle \, d \mathbf{x}, \quad \mathbf{u} \in H^p(\mathbb{R}^d; \mathbb{C}^n).
\end{equation}
Using the Fourier transform and \eqref{g_cond}, \eqref{b^*b}, it is easy to check that
\begin{equation}
\label{form_a_est}
c_0 \int_{\mathbb{R}^d} | {\mathbf D}^p \mathbf{u}(\mathbf{x})|^2 \, d\mathbf{x} \leqslant
\mathfrak{a}[\mathbf{u}, \mathbf{u}] \leqslant c_1 \int_{\mathbb{R}^d} | {\mathbf D}^p \mathbf{u}(\mathbf{x})|^2 \, d\mathbf{x},
\quad \mathbf{u} \in H^p(\mathbb{R}^d; \mathbb{C}^n).
\end{equation}
Here  $| {\mathbf D}^p \mathbf{u}(\mathbf{x})|^2 := \sum_{|\beta|=p} | \mathbf{D}^\beta \mathbf{u}(\mathbf{x})|^2$. 
The constants $c_0, c_1$ are given by
\begin{equation}
\label{c0,c1}
c_0 = C(p) \alpha_0 \| g^{-1}\|^{-1}_{L_\infty},\quad c_1 = C(p) \alpha_1 \| g \|_{L_\infty}.
\end{equation}
Hence, the form \eqref{form_a} is closed and nonnegative.
By definition, $A$ is a selfadjoint operator in $L_2(\mathbb{R}^d;\mathbb{C}^n)$ generated by this form. 

Note that the operator $A$ can be written as $A= X^* X$, where 
$X: L_2(\mathbb{R}^d;\mathbb{C}^n) \to L_2(\mathbb{R}^d;\mathbb{C}^m)$ is a closed operator defined by 
$$
X  = g^{1/2} b(\mathbf{D}),\quad \operatorname{Dom} X = H^p(\mathbb{R}^d; \mathbb{C}^n).
$$

\subsection{Operators $A(\mathbf{k})$ in $L_2(\Omega;\mathbb{C}^n)$}
Let ${\mathbf k} \in \mathbb{R}^d$. 
In $L_2(\Omega; \mathbb{C}^n)$, we consider the quadratic form
\begin{equation}
\label{form_a(k)}
\mathfrak{a}(\mathbf{k})[\mathbf{u}, \mathbf{u}] = \int_{\Omega} \langle g(\mathbf{x}) 
b(\mathbf{D}+ \mathbf{k}) \mathbf{u}(\mathbf{x}), b(\mathbf{D}+ \mathbf{k}) \mathbf{u}(\mathbf{x})\rangle 
\, d \mathbf{x}, \quad \mathbf{u} \in \widetilde{H}^p(\Omega; \mathbb{C}^n).
\end{equation}
 Using the discrete Fourier transform and \eqref{g_cond}, \eqref{b^*b}, it is easy to check that
\begin{equation*}
c_0 \int_{\Omega} | ({\mathbf D} + \mathbf{k} )^p \mathbf{u}(\mathbf{x})|^2 \, d\mathbf{x} \leqslant
\mathfrak{a}(\mathbf{k})[\mathbf{u}, \mathbf{u}] \leqslant c_1 \int_{\Omega} | ({\mathbf D} + \mathbf{k})^p \mathbf{u}(\mathbf{x})|^2 \, d\mathbf{x},
\quad \mathbf{u} \in \widetilde{H}^p(\Omega; \mathbb{C}^n).
\end{equation*}
Here $c_0, c_1$ are the same constants as in \eqref{form_a_est}; see \eqref{c0,c1}.
Hence, the form \eqref{form_a(k)} is closed and nonnegative. A selfadjoint operator in $L_2(\Omega;\mathbb{C}^n)$
corresponding to this form is denoted by $A(\mathbf{k})$. Formally, we have
$
A(\mathbf{k}) = b(\mathbf{D}+ \mathbf{k})^* g(\mathbf{x}) b(\mathbf{D}+ \mathbf{k}).
$

Note that the operator $A(\mathbf{k})$ can be written as $A(\mathbf{k}) = X(\mathbf{k})^* X(\mathbf{k})$, where 
$X(\mathbf{k}): L_2(\Omega;\mathbb{C}^n) \to L_2(\Omega;\mathbb{C}^m)$ is a closed operator defined by 
$$
X(\mathbf{k})  = g^{1/2} b(\mathbf{D}+ \mathbf{k}),\quad \operatorname{Dom} X(\mathbf{k}) = 
\widetilde{H}^p(\Omega; \mathbb{C}^n).
$$

\subsection{Direct integral expansion for the operator $A$}
Using the Gelfand transform $\mathcal{U}$ defined in Subsection \ref{Gelf_trans}, we expand the operator $A$
in the direct integral of the operators $A(\mathbf{k})$.   Let $\mathbf{v}\in H^p(\mathbb{R}^d;\mathbb{C}^n)$
and let $\widetilde{\mathbf v}(\mathbf{k}, \mathbf{x}) = ({\mathcal U} \mathbf{v})(\mathbf{k}, \mathbf{x})$.
Then $\widetilde{\mathbf v} \in L_2(\widetilde{\Omega}; \widetilde{H}^p(\Omega;\mathbb{C}^n))$ and
\begin{equation}
\label{direct}
\mathfrak{a}[{\mathbf v},{\mathbf v} ] = \int_{\widetilde{\Omega}} \mathfrak{a}(\mathbf{k})
[ \widetilde{\mathbf v}(\mathbf{k}, \cdot), \widetilde{\mathbf v}(\mathbf{k}, \cdot) ] \, d \mathbf{k}.
\end{equation}
Conversely, if $\widetilde{\mathbf v} \in L_2(\widetilde{\Omega}; \widetilde{H}^p(\Omega;\mathbb{C}^n))$, then
$\mathbf{v} = {\mathcal U}^{-1}\widetilde{\mathbf v} \in H^p(\mathbb{R}^d;\mathbb{C}^n)$
and identity \eqref{direct} is fulfilled. This means that
\begin{equation}
\label{direct2}
{\mathcal U} A {\mathcal U}^{-1}  = \int_{\widetilde{\Omega}} \oplus A(\mathbf{k})\, d \mathbf{k}.
\end{equation}

\section{Application of the abstract results to $A(\mathbf{k})$}

\subsection{Incorporation of the operators $A(\mathbf{k})$ in the abstract scheme}

We shall apply the scheme of Section \ref{abstr_scheme},
putting $\mathfrak{H} = L_2(\Omega; \mathbb{C}^n)$ and $\mathfrak{H}_* = L_2(\Omega; \mathbb{C}^m)$.
 
We write $\mathbf{k}$ as ${\mathbf k} = t \boldsymbol{\theta}$, where $t = |\mathbf{k}|$ and
$ \boldsymbol{\theta} \in \mathbb{S}^{d-1}$.
The roles of $X(t)$ and $A(t)$ are played by the operators $X(\mathbf{k}) =: X(t, \boldsymbol{\theta})$ and $A(\mathbf{k}) =: A(t, \boldsymbol{\theta})$. They
depend on the one-dimensional parameter $t$ and the additional parameter $\boldsymbol{\theta}$,
which was absent in the abstract scheme. He have to take care about this and to prove estimates uniform in 
$\boldsymbol{\theta}$.

Let us check that all the assumptions of Section \ref{abstr_scheme} are fulfilled. We have
$$
X(\mathbf{k}) = g^{1/2}\sum_{|\beta|=p} b_\beta (\mathbf{D} + \mathbf{k})^\beta = 
 g^{1/2}\sum_{|\beta|=p} b_\beta \sum_{\gamma \leqslant \beta} C_\beta^\gamma t^{|\beta-\gamma|}
\boldsymbol{\theta}^{\beta-\gamma} \mathbf{D}^\gamma.
$$
Hence, the operator $X(\mathbf{k}) =: X(t, \boldsymbol{\theta})$ can be written as
$
X(t, \boldsymbol{\theta})= X_0 + \sum_{j=1}^p t^j X_j(\boldsymbol{\theta}),
$
where the operator
$X_0 = g^{1/2} b(\mathbf{D})$, $\operatorname{Dom} X_0 = \widetilde{H}^p(\Omega;\mathbb{C}^n),$
is closed, the operators $X_1(\boldsymbol{\theta}),\dots, X_{p-1}(\boldsymbol{\theta})$ are given by
\begin{equation}
\label{def_Xj}
X_j(\boldsymbol{\theta}) =  g^{1/2}\sum_{|\beta|=p} b_\beta \sum_{\gamma \leqslant \beta:
| \gamma| =p- j} C_\beta^\gamma \boldsymbol{\theta}^{\beta-\gamma} \mathbf{D}^\gamma,
 \quad \operatorname{Dom} X_j (\boldsymbol{\theta}) = \widetilde{H}^{p-j}(\Omega;\mathbb{C}^n),
\end{equation}
and the operator $X_p(\boldsymbol{\theta}) = g^{1/2} b(\boldsymbol{\theta})$ is bounded from $\mathfrak{H}$
to $\mathfrak{H}_*$. 

Obviously, Condition \ref{sup1} is satisfied.
Condition \ref{sup2} is also satisfied with 
\begin{equation}
\label{C0}
C_0 = C(d,p) \alpha_1^{1/2} \alpha_0^{-1/2} \|g\|_{L_\infty}^{1/2} \|g^{-1}\|_{L_\infty}^{1/2} (1+r_0^{-1})^{p-1},
\end{equation}
where $C(d,p)$ depends only on $d$ and $p$; see \cite[Proposition 5.2]{KuSu}.  

By \eqref{b^*b}, we obtain the uniform bound for the norm of $X_p(\boldsymbol{\theta})$:
\begin{equation}
\label{Xp_est}
\|X_p(\boldsymbol{\theta})\| \leqslant \alpha_1^{1/2} \|g\|^{1/2}_{L_\infty},
\quad \boldsymbol{\theta} \in {\mathbb S}^{d-1}.
\end{equation}

Let $\mathfrak{N} = \operatorname{Ker} A(0) = \operatorname{Ker} X_0$.
It is easy to check that $\mathfrak{N}$ consists of constant vector-valued functions (see \cite[Proposition 5.1]{KuSu}):
\begin{equation}
\label{N_got}
\mathfrak{N} = \{ \mathbf{u} \in L_2(\Omega; \mathbb{C}^n): \ \mathbf{u}(\mathbf{x}) = \mathbf{c} \in \mathbb{C}^n \}.
\end{equation}
So, $\operatorname{dim} \mathfrak{N} = n$. The orthogonal projection of $L_2(\Omega; {\mathbb C}^n)$
onto $\mathfrak N$ is the operator of the averaging over the cell:
\begin{equation}
\label{proj_P}
P \mathbf{u} = |\Omega|^{-1} \int_{\Omega} \mathbf{u} (\mathbf{x})\, d\mathbf{x},
\quad \mathbf{u} \in L_2(\Omega; {\mathbb C}^n).
\end{equation}

Let $\mathfrak{N}_* = \operatorname{Ker} X_0^*$ and $n_* = \operatorname{dim} \mathfrak{N}_*$.
The condition $m \geqslant n$ ensures that $n \leqslant n_*$. 
Moreover, either $n_* = \infty$ (if $m>n$), or $n_*=n$ (if $m=n$). See \cite[Section 5.1]{KuSu} for details.

Since the embedding of $\widetilde{H}^p(\Omega;\mathbb{C}^n)$ into $L_2(\Omega;\mathbb{C}^n)$
is compact, the spectrum of the operator $A(0)$ is discrete. The point $\lambda_0 =0$ is an isolated eigenvalue of $A(0)$ of multiplicity $n$; the corresponding eigenspace $\mathfrak{N}$ is given by \eqref{N_got}.
Thus, Condition \ref{sup3} is satisfied.

Let $d^0$ be the distance from the point $\lambda_0 =0$ to the rest of the spectrum of $A(0)$.
According to \cite[(5.17)]{KuSu}, 
\begin{equation}
\label{d0_est}
d^0 \geqslant \alpha_0 \| g^{-1}\|^{-1}_{L_\infty} (2 r_0)^{2p}.
\end{equation}
In Subsection \ref{sec_abstract_scheme_1.1} it was required to fix 
a positive number $\delta \leqslant \min \{d^0/36, 1/4\}$. Using  \eqref{d0_est}, we choose $\delta$ as follows:
\begin{equation}
\label{delta_fix}
\delta = \min \{  \alpha_0 \| g^{-1}\|^{-1}_{L_\infty} (2 r_0)^{2p}/36 , 1/4 \}.
\end{equation}

Next, the constant $C_1(\boldsymbol{\theta}) = \max \{(p-1)C_0, \|X_p(\boldsymbol{\theta})\|\}$
now depends on $\boldsymbol{\theta}$ (see \eqref{1.5z}). Using \eqref{Xp_est}, we see that 
$C_1(\boldsymbol{\theta}) \leqslant C_1$, where
\begin{equation}
\label{C1_fix}
C_1 = \max \{ (p-1) C_0, \alpha_1^{1/2} \|g\|^{1/2}_{L_\infty} \}.
\end{equation}
Here $C_0$ is given by \eqref{C0}.
According to \eqref{1.5z}, we fix a number $t_0 \leqslant \delta^{1/2} C_1(\boldsymbol{\theta})^{-1}$ as follows:
\begin{equation}
\label{t0_fix}
t_0 = \delta^{1/2} C_1^{-1},
\end{equation}
where $\delta$ and $C_1$ are defined by \eqref{delta_fix} and \eqref{C1_fix}, respectively.

\subsection{The operators $Z(\boldsymbol{\theta})$, $R(\boldsymbol{\theta})$, and $S(\boldsymbol{\theta})$}

For the operator family $A(t,\boldsymbol{\theta})$, the operators $Z$, $R$, and $S$  defined in Subsection \ref{secZ,R,S} in the abstract setting
depend on the parameter $\boldsymbol{\theta}$.

To describe these operators, we introduce the $(n \times m)$-matrix-valued function $\Lambda(\mathbf{x})$
which is a $\Gamma$-periodic solution of the following problem:
\begin{equation}
\label{Lambda_problem}
b(\mathbf{D})^* g(\mathbf{x}) \left( b(\mathbf{D}) \Lambda(\mathbf{x}) + {\mathbf 1}_m \right)=0,
\quad \int_\Omega \Lambda(\mathbf{x}) \, d{\mathbf x} =0.
\end{equation}
The equation is understood in the weak sense: for each ${\mathbf C} \in \mathbb{C}^m$
we have $\Lambda {\mathbf C} \in \widetilde{H}^p(\Omega; \mathbb{C}^n)$ and
$$
\int_\Omega  \langle g(\mathbf{x})  (b(\mathbf{D}) \Lambda(\mathbf{x}) \mathbf{C}+ \mathbf{C}), b(\mathbf{D}) \boldsymbol{\eta}(\mathbf{x}) \rangle \, d \mathbf{x} =0,\quad \boldsymbol{\eta} \in \widetilde{H}^p(\Omega; \mathbb{C}^n).
$$
Then (cf. \cite[Section 5.3]{KuSu})
\begin{equation}
\label{def_Z(theta)}
Z(\boldsymbol{\theta}) = [\Lambda] b(\boldsymbol{\theta}) P,
\end{equation}
where $[\Lambda]$ denotes the operator of multiplication by the matrix-valued function $\Lambda(\mathbf{x})$.
The operator $R(\boldsymbol{\theta})$ is given by
\begin{equation}
\label{def_R(theta)}
R(\boldsymbol{\theta}) = [g^{1/2}(b(\mathbf{D})\Lambda + \mathbf{1}_m)] b(\boldsymbol{\theta})\vert_{\mathfrak N}.
\end{equation}
Then (cf. \cite[Section 5.3]{KuSu}), the spectral germ $S(\boldsymbol{\theta})=R(\boldsymbol{\theta})^*R(\boldsymbol{\theta})$  acts in the subspace $\mathfrak{N}$ (see \eqref{N_got}) and is represented as
\begin{equation}
\label{germ_DO}
S(\boldsymbol{\theta}) = b(\boldsymbol{\theta})^* g^0 b(\boldsymbol{\theta}),
\quad \boldsymbol{\theta} \in \mathbb{S}^{d-1}.
\end{equation}
Here $g^0$ is the so called \emph{effective matrix} (of size $m \times m$) given by
$$
g^0 = |\Omega|^{-1} \int_{\Omega} \widetilde{g}(\mathbf{x}) \, d \mathbf{x},\quad
\widetilde{g}(\mathbf{x}) := g(\mathbf{x}) \left( b(\mathbf{D}) \Lambda(\mathbf{x}) + {\mathbf 1}_m \right).
$$
It turns out that the effective matrix $g^0$ is positive definite. So, the germ $S(\boldsymbol{\theta})$ is non-degenerate.
We mention some properties of $g^0$; see \cite[Propositions 5.3, 5.4]{KuSu}.

\begin{proposition}
\label{Voigt-Reuss}
Denote
$$
\overline{g} = |\Omega|^{-1} \int_\Omega g(\mathbf{x}) \, d\mathbf{x}, \quad
\underline{g} = \left(|\Omega|^{-1} \int_\Omega g(\mathbf{x})^{-1} \, d\mathbf{x} \right)^{-1}.
$$
The effective matrix $g^0$ satisfies the following estimates \emph{(}the Voigt--Reuss bracketing\emph{):}
$\underline{g} \leqslant g^0 \leqslant \overline{g}$.
In the case where $m=n$, we have $g^0 = \underline{g}$.
\end{proposition}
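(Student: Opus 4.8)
The plan is to read both inequalities off the weak formulation of the corrector problem \eqref{Lambda_problem}. The key observation is that, for each fixed $\mathbf{C} \in \mathbb{C}^m$, the function $\Lambda \mathbf{C} \in \widetilde{H}^p(\Omega;\mathbb{C}^n)$ is precisely the minimizer of the quadratic functional $\boldsymbol{\eta} \mapsto |\Omega|^{-1}\int_\Omega \langle g(\mathbf{x})(b(\mathbf{D})\boldsymbol{\eta} + \mathbf{C}), b(\mathbf{D})\boldsymbol{\eta} + \mathbf{C}\rangle\, d\mathbf{x}$ over $\widetilde{H}^p(\Omega;\mathbb{C}^n)$, since \eqref{Lambda_problem} is exactly its Euler--Lagrange equation (coercivity and lower semicontinuity of the functional follow from the estimates underlying \eqref{form_a(k)} taken at $\mathbf{k} = \mathbf{0}$). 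Testing the weak form of \eqref{Lambda_problem} with $\boldsymbol{\eta} = \Lambda\mathbf{C}$ kills the cross term and yields the identity $\langle g^0\mathbf{C},\mathbf{C}\rangle = |\Omega|^{-1}\int_\Omega \langle g(b(\mathbf{D})\Lambda\mathbf{C} + \mathbf{C}), b(\mathbf{D})\Lambda\mathbf{C} + \mathbf{C}\rangle\, d\mathbf{x}$, i.e.\ $\langle g^0\mathbf{C},\mathbf{C}\rangle$ is the minimal value of the functional. Choosing the admissible competitor $\boldsymbol{\eta} = 0$ gives $\langle g^0\mathbf{C},\mathbf{C}\rangle \leqslant |\Omega|^{-1}\int_\Omega \langle g\mathbf{C},\mathbf{C}\rangle\, d\mathbf{x} = \langle \overline{g}\,\mathbf{C},\mathbf{C}\rangle$, hence $g^0 \leqslant \overline{g}$.

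For the lower bound I would set $\mathbf{q}_{\mathbf{C}} := b(\mathbf{D})\Lambda\mathbf{C} + \mathbf{C}$. Since $\Lambda$ is $\Gamma$-periodic and $b(\mathbf{D})$ has no zeroth-order term, $|\Omega|^{-1}\int_\Omega \mathbf{q}_{\mathbf{C}}\, d\mathbf{x} = \mathbf{C}$; moreover $g\,\mathbf{q}_{\mathbf{C}} = \widetilde{g}\,\mathbf{C}$, so the identity above reads $|\Omega|^{-1}\int_\Omega \langle g\mathbf{q}_{\mathbf{C}},\mathbf{q}_{\mathbf{C}}\rangle\, d\mathbf{x} = \langle g^0\mathbf{C},\mathbf{C}\rangle$. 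Given $\boldsymbol{\xi}\in\mathbb{C}^m$, put $\mathbf{C} = (g^0)^{-1}\boldsymbol{\xi}$ (legitimate since $g^0 > 0$ is already established). Then $\langle (g^0)^{-1}\boldsymbol{\xi},\boldsymbol{\xi}\rangle = \langle\mathbf{C},\boldsymbol{\xi}\rangle = |\Omega|^{-1}\int_\Omega \langle \mathbf{q}_{\mathbf{C}},\boldsymbol{\xi}\rangle\, d\mathbf{x} = |\Omega|^{-1}\int_\Omega \langle g^{1/2}\mathbf{q}_{\mathbf{C}},\, g^{-1/2}\boldsymbol{\xi}\rangle\, d\mathbf{x}$, and the Cauchy--Schwarz inequality in $L_2(\Omega;\mathbb{C}^m)$ bounds this by $\langle g^0\mathbf{C},\mathbf{C}\rangle^{1/2}\,\langle\underline{g}^{-1}\boldsymbol{\xi},\boldsymbol{\xi}\rangle^{1/2} = \langle(g^0)^{-1}\boldsymbol{\xi},\boldsymbol{\xi}\rangle^{1/2}\,\langle\underline{g}^{-1}\boldsymbol{\xi},\boldsymbol{\xi}\rangle^{1/2}$; dividing gives $\langle(g^0)^{-1}\boldsymbol{\xi},\boldsymbol{\xi}\rangle \leqslant \langle\underline{g}^{-1}\boldsymbol{\xi},\boldsymbol{\xi}\rangle$, that is $(g^0)^{-1}\leqslant\underline{g}^{-1}$, i.e.\ $g^0 \geqslant \underline{g}$.

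For the case $m = n$ the symbol $b(\boldsymbol{\xi})$ is square and, by \eqref{b^*b}, invertible for $\boldsymbol{\xi}\neq 0$ with $|b(\mathbf{b})^{-1}| \leqslant \alpha_0^{-1/2}|\mathbf{b}|^{-p}$. For fixed $\mathbf{C}$, the function $\mathbf{f} := (g^{-1}\underline{g} - \mathbf{1}_m)\mathbf{C}$ is $\Gamma$-periodic with zero mean (because $\underline{g}^{-1}\underline{g} = \mathbf{1}_m$ and $\underline{g}^{-1} = |\Omega|^{-1}\int_\Omega g^{-1}\, d\mathbf{x}$), so expanding in the Fourier series over $\widetilde{\Gamma}$ and inverting $b(\mathbf{b})$ mode by mode produces a unique $\Gamma$-periodic zero-mean solution $\boldsymbol{\psi} \in \widetilde{H}^p(\Omega;\mathbb{C}^n)$ of $b(\mathbf{D})\boldsymbol{\psi} = \mathbf{f}$ (the bound on $b(\mathbf{b})^{-1}$ gives exactly the $H^p$-regularity). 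Setting $\Lambda\mathbf{C} := \boldsymbol{\psi}$ yields $g(b(\mathbf{D})\Lambda\mathbf{C} + \mathbf{C}) = \underline{g}\,\mathbf{C}$, which is constant, so $b(\mathbf{D})^* g(b(\mathbf{D})\Lambda + \mathbf{1}_m) = 0$; by the uniqueness of the solution of \eqref{Lambda_problem} this is the corrector, so $\widetilde{g} \equiv \underline{g}$ and $g^0 = \underline{g}$.

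The only technically delicate points are the rigorous justification of the minimization identity (a standard direct-method argument relying on the coercivity already recorded in \eqref{form_a(k)} at $\mathbf{k}=\mathbf{0}$) and the Fourier-series regularity in the case $m = n$; I expect the former to be the main obstacle, whereas the bracketing itself then reduces to inserting the competitor $\boldsymbol{\eta} = 0$ and to a single application of the Cauchy--Schwarz inequality.
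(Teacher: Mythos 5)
Your argument is correct in all of its steps. The paper itself does not supply a proof of this proposition; it cites \cite[Propositions~5.3, 5.4]{KuSu}, so there is no in-paper proof to compare against. What you write is the standard Voigt--Reuss variational argument, which is exactly the route those references take: the identity
$\langle g^0\mathbf{C},\mathbf{C}\rangle = |\Omega|^{-1}\int_\Omega\langle g\,\mathbf{q}_{\mathbf C},\mathbf{q}_{\mathbf C}\rangle\,d\mathbf{x}$
obtained by testing the weak corrector equation with $\Lambda\mathbf{C}$, the competitor $\boldsymbol{\eta}=0$ for the upper bound, and Cauchy--Schwarz against $g^{-1/2}\boldsymbol{\xi}$ together with the mean-value property $|\Omega|^{-1}\int_\Omega\mathbf{q}_{\mathbf C}\,d\mathbf{x}=\mathbf{C}$ for the lower bound. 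Two small remarks. First, the ``minimization identity'' you flag as the main obstacle is not really one: given the unique weak solution $\Lambda\mathbf{C}$ (Riesz, as already set up in Subsection~1.2), expanding $|\Omega|^{-1}\int_\Omega\langle g(b(\mathbf{D})\boldsymbol{\eta}+\mathbf{C}),b(\mathbf{D})\boldsymbol{\eta}+\mathbf{C}\rangle$ around $\boldsymbol{\eta}=\Lambda\mathbf{C}$ and killing the cross term by the weak equation shows directly that the functional equals its value at $\Lambda\mathbf{C}$ plus a nonnegative quadratic remainder; no direct-method machinery is needed. Second, you invoke $g^0>0$ before it is proved in order to form $(g^0)^{-1}$; this is harmless here since the paper asserts positivity just before the proposition, and in any case the Cauchy--Schwarz inequality $|\langle\mathbf{C},\boldsymbol{\xi}\rangle|\leqslant\langle g^0\mathbf{C},\mathbf{C}\rangle^{1/2}\langle\underline{g}^{-1}\boldsymbol{\xi},\boldsymbol{\xi}\rangle^{1/2}$ already forces $\operatorname{Ker}g^0=\{0\}$, so positivity comes out of your computation for free. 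Your explicit Fourier-mode construction of the corrector in the case $m=n$, using the invertibility of $b(\mathbf{b})$ and the bound $|b(\mathbf{b})^{-1}|\leqslant\alpha_0^{-1/2}|\mathbf{b}|^{-p}$ from \eqref{b^*b}, is likewise correct and is the usual proof that $g^0=\underline{g}$ when the symbol is square.
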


\begin{proposition}
\label{special_cases}
$1^\circ$. Let ${\mathbf g}_k(\mathbf{x})$, $k=1,\dots,m,$ be the columns of the matrix $g(\mathbf{x})$.
The relation $g^0 = \overline{g}$ is equivalent to the identities 
\begin{equation}
\label{g_verhnee}
b(\mathbf{D})^* {\mathbf g}_k(\mathbf{x})=0, \quad k=1,\dots,m.
\end{equation}

\noindent $2^\circ$. Let ${\mathbf l}_k(\mathbf{x})$, $k=1,\dots,m,$ be the columns of the matrix $g(\mathbf{x})^{-1}$.
The relation $g^0 = \underline{g}$ is equivalent to the representations  
\begin{equation}
\label{g_nizhnee}
{\mathbf l}_k(\mathbf{x}) = {\mathbf l}_k^0 + b(\mathbf{D}) \mathbf{v}_k(\mathbf{x}),
\quad {\mathbf l}_k^0 \in {\mathbb C}^m,\ \mathbf{v}_k  \in \widetilde{H}^p(\Omega;\mathbb{C}^n);
\quad  k=1,\dots,m.
\end{equation}
\end{proposition}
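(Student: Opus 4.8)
The plan is to deduce both equivalences from the two standard extremal characterizations of the effective matrix — a primal one for $g^0$ and a dual one for $(g^0)^{-1}$ — each of which is a consequence of the cell equation \eqref{Lambda_problem} together with $g(\mathbf x)>0$. Expanding the energy around the corrector and using the weak form of \eqref{Lambda_problem} gives, for every $\mathbf C\in\mathbb C^m$,
\[
\langle g^0\mathbf C,\mathbf C\rangle = |\Omega|^{-1}\min_{\mathbf w}\int_\Omega\langle g(\mathbf x)(b(\mathbf D)\mathbf w+\mathbf C),\,b(\mathbf D)\mathbf w+\mathbf C\rangle\,d\mathbf x,
\]
the minimum being over $\mathbf w\in\widetilde H^p(\Omega;\mathbb C^n)$ and attained exactly when $b(\mathbf D)\mathbf w=b(\mathbf D)\Lambda\mathbf C$. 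Dually, since $b(\mathbf D)^*\widetilde g=0$ and $|\Omega|^{-1}\int_\Omega\widetilde g=g^0$, one obtains for every $\mathbf q\in\mathbb C^m$
\[
\langle (g^0)^{-1}\mathbf q,\mathbf q\rangle = |\Omega|^{-1}\min_{\mathbf z}\int_\Omega\langle g(\mathbf x)^{-1}\mathbf z(\mathbf x),\,\mathbf z(\mathbf x)\rangle\,d\mathbf x,
\]
the minimum running over $\mathbf z\in L_2(\Omega;\mathbb C^m)$ with $b(\mathbf D)^*\mathbf z=0$ and $|\Omega|^{-1}\int_\Omega\mathbf z=\mathbf q$, and attained exactly at $\mathbf z=\widetilde g(g^0)^{-1}\mathbf q$. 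I will also use that the solution of \eqref{Lambda_problem} is unique (an energy argument: $b(\mathbf D)^*g\,b(\mathbf D)\Lambda=0$ with $\int_\Omega\Lambda=0$ and $g>0$ force $\Lambda\equiv 0$), and the elementary facts that $\int_\Omega b(\mathbf D)f=0$ for $\Gamma$-periodic $f$, while $b(\mathbf D)^*$ annihilates constants (as $p\geqslant 2$).

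For statement $1^\circ$: if $b(\mathbf D)^*\mathbf g_k=0$ for all $k$, i.e. $b(\mathbf D)^*g=0$, then $\Lambda\equiv 0$ solves \eqref{Lambda_problem}, so by uniqueness $\Lambda\equiv 0$, hence $\widetilde g=g$ and $g^0=\overline g$. Conversely, taking $\mathbf w=0$ in the primal principle gives $\langle g^0\mathbf C,\mathbf C\rangle\leqslant\langle\overline g\mathbf C,\mathbf C\rangle$; equality for all $\mathbf C$ (both matrices Hermitian) means $\mathbf w=0$ is a minimizer for every $\mathbf C$, hence $b(\mathbf D)\Lambda\mathbf C=0$ for all $\mathbf C$, i.e. $b(\mathbf D)\Lambda\equiv 0$; with $\int_\Omega\Lambda=0$ this gives $\Lambda\equiv 0$, and \eqref{Lambda_problem} then yields $b(\mathbf D)^*g=0$, which is \eqref{g_verhnee}.

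For statement $2^\circ$: suppose first $g^0=\underline g$. The constant field $\mathbf z\equiv\mathbf q$ is admissible in the dual principle and gives $\langle(g^0)^{-1}\mathbf q,\mathbf q\rangle\leqslant|\Omega|^{-1}\int_\Omega\langle g^{-1}\mathbf q,\mathbf q\rangle=\langle\underline g^{-1}\mathbf q,\mathbf q\rangle$; the hypothesis forces equality for all $\mathbf q$, so $\mathbf z\equiv\mathbf q$ is a minimizer, and by uniqueness of the dual minimizer $\widetilde g(\mathbf x)(g^0)^{-1}\mathbf q=\mathbf q$ for all $\mathbf q$, i.e. $\widetilde g(\mathbf x)\equiv g^0$. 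Then $g(\mathbf x)^{-1}=(g^0)^{-1}+b(\mathbf D)\bigl(\Lambda(\mathbf x)(g^0)^{-1}\bigr)$, and reading off the $k$-th column gives \eqref{g_nizhnee} with $\mathbf l_k^0=(g^0)^{-1}$ applied to the $k$-th basis vector and the corresponding $\mathbf v_k=\Lambda(g^0)^{-1}$ applied to it. Conversely, if $g^{-1}(\mathbf x)=L^0+b(\mathbf D)V(\mathbf x)$ with $L^0$ the constant matrix of columns $\mathbf l_k^0$ and $V$ the matrix of columns $\mathbf v_k$, then integrating over $\Omega$ gives $L^0=|\Omega|^{-1}\int_\Omega g^{-1}\,d\mathbf x=\underline g^{-1}$, which is invertible. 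Setting $\Lambda:=(V-\overline V)(L^0)^{-1}$ with $\overline V:=|\Omega|^{-1}\int_\Omega V$, we get $\int_\Omega\Lambda=0$ and $g(\mathbf x)\bigl(b(\mathbf D)\Lambda(\mathbf x)+\mathbf 1_m\bigr)=g(\mathbf x)g(\mathbf x)^{-1}(L^0)^{-1}=(L^0)^{-1}$, a constant matrix annihilated by $b(\mathbf D)^*$; hence this $\Lambda$ solves \eqref{Lambda_problem}, so by uniqueness it is the corrector from the definition of $g^0$, whence $\widetilde g\equiv(L^0)^{-1}$ and $g^0=(L^0)^{-1}=\underline g$.

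The step requiring the most care is the dual variational principle: one must fix the admissible class $\{\mathbf z\in L_2(\Omega;\mathbb C^m):b(\mathbf D)^*\mathbf z=0\}$, check that it is closed and contains constants, verify that $\widetilde g(g^0)^{-1}\mathbf q$ lies in it with mean $\mathbf q$, and establish the orthogonality $\int_\Omega\langle g^{-1}\widetilde g(g^0)^{-1}\mathbf q,\,\mathbf h\rangle=0$ for every admissible mean-zero $\mathbf h$ (using $g^{-1}\widetilde g=b(\mathbf D)\Lambda+\mathbf 1_m$, the definition of $b(\mathbf D)^*$, and the mean-zero property), which simultaneously identifies the minimizer and yields its uniqueness. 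The remaining bookkeeping — passing between matrix identities and their columns, and between ``minimizer for every $\mathbf C$ or $\mathbf q$'' and a pointwise matrix equation — is routine. These arguments are the exact analogues of the second-order ones in \cite{BSu1} and \cite{KuSu}.
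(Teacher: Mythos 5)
The paper does not prove this proposition---it only cites \cite{KuSu} (Propositions 5.3 and 5.4). Your argument via the primal and dual variational characterizations of $g^0$, together with the uniqueness of the mean-zero corrector $\Lambda$, is precisely the standard one used in that reference (the higher-order analogue of the second-order case in \cite{BSu1}), and it is correct.
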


\begin{rem}
In the case where $g^0 = \underline{g}$, the matrix $\widetilde{g}(\mathbf{x})$ is constant: $\widetilde{g}(\mathbf x) = g^0 = \underline{g}$.
\end{rem}

\subsection{The effective operator}

By \eqref{germ_DO} and the homogeneity of the symbol $b(\mathbf{k})$, we have
\begin{equation}
\label{eff_op_symbol}
S(\mathbf{k}) := t^{2p} S(\boldsymbol{\theta}) = b(\mathbf{k})^* g^0 b(\mathbf{k}), 
\quad \mathbf{k} \in {\mathbb R}^d.
\end{equation}
Expression \eqref{eff_op_symbol} is the symbol of the DO
\begin{equation}
\label{eff_op}
A^0 =  b(\mathbf{D})^* g^0 b(\mathbf{D}), \quad \operatorname{Dom} A^0 = H^{2p}(\mathbb{R}^d;\mathbb{C}^n),
\end{equation}
which is called the \emph{effective operator} for $A$.

Let $A^0(\mathbf{k})$ be the operator family in $L_2(\Omega;\mathbb{C}^n)$ corresponding to the operator $A^0$.
Then $A^0(\mathbf{k})$ is given by the differential expression 
$ b(\mathbf{D} + \mathbf{k})^* g^0 b(\mathbf{D} + \mathbf{k})$ on the domain
$\widetilde{H}^{2p}(\Omega;\mathbb{C}^n)$. 
 
 By \eqref{proj_P} and \eqref{eff_op_symbol}, we have
\begin{equation}
\label{eff_op2}
S(\mathbf{k}) P = A^0(\mathbf{k}) P. 
\end{equation}

\subsection{The operator $G(\boldsymbol{\theta})$}

For $A(t,\boldsymbol{\theta})$, the operator $G$  defined by \eqref{op_G} 
 in the abstract setting depends on $\boldsymbol{\theta}$:
$$
G (\boldsymbol{\theta}) = (R(\boldsymbol{\theta}) P)^* X_1(\boldsymbol{\theta}) Z(\boldsymbol{\theta}) +
 (X_1(\boldsymbol{\theta}) Z(\boldsymbol{\theta}))^* R(\boldsymbol{\theta}) P.
$$
Let $B_1(\boldsymbol{\theta};\mathbf{D})$ be the DO of order $p-1$ such that 
$X_1(\boldsymbol{\theta}) = g^{1/2} B_1(\boldsymbol{\theta};\mathbf{D})$ (see \eqref{def_Xj}). Then
\begin{equation*}
B_1(\boldsymbol{\theta}; \mathbf{D}) = \sum_{|\beta|=p} b_\beta \sum_{\gamma \leqslant \beta:
| \gamma| =p- 1} C_\beta^\gamma \boldsymbol{\theta}^{\beta-\gamma} \mathbf{D}^\gamma.
\end{equation*}
Using \eqref{def_Z(theta)} and \eqref{def_R(theta)}, we obtain
\begin{equation}
\label{G(theta)=}
G (\boldsymbol{\theta}) = b(\boldsymbol{\theta})^* g^{(1)}(\boldsymbol{\theta})b(\boldsymbol{\theta}) P,
 \end{equation}
 where $g^{(1)} (\boldsymbol{\theta})$ is a Hermitian $(m \times m)$-matrix given by
\begin{equation}
\label{g1(theta)=}
g^{(1)} (\boldsymbol{\theta}) = |\Omega|^{-1}  \int_\Omega\left( \widetilde{g}(\mathbf{x})^*  B_1(\boldsymbol{\theta}; \mathbf{D}) \Lambda(\mathbf{x})  + (B_1(\boldsymbol{\theta}; \mathbf{D}) \Lambda(\mathbf{x}))^* \widetilde{g}(\mathbf{x}) \right) \, d \mathbf{x}.
 \end{equation}

 We distinguish some cases where the operator \eqref{G(theta)=} is equal to zero.
 
 \begin{proposition}
 \label{prop_cond_G=0}
$1^\circ$. Suppose that relations \eqref{g_verhnee} are satisfied. Then $\Lambda(\mathbf{x})=0$, whence 
$g^{(1)} (\boldsymbol{\theta})=0$ and $G (\boldsymbol{\theta})=0$.

\noindent$2^\circ$. Suppose that relations \eqref{g_nizhnee} are satisfied. Then  
$g^{(1)} (\boldsymbol{\theta})=0$ and $G (\boldsymbol{\theta})=0$.

\noindent$3^\circ$. Suppose that $n=1$ and the matrices $g(\mathbf{x})$, $b_\beta$, $|\beta|=p$, have real entries. Then $G (\boldsymbol{\theta})=0$ for any $\boldsymbol{\theta} \in \mathbb{S}^{d-1}$.
\end{proposition}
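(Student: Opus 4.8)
\emph{Proof plan.} By \eqref{G(theta)=}, in all three cases it suffices to show that the Hermitian matrix $g^{(1)}(\boldsymbol{\theta})$ of \eqref{g1(theta)=} satisfies $b(\boldsymbol{\theta})^* g^{(1)}(\boldsymbol{\theta}) b(\boldsymbol{\theta}) = 0$. Cases $1^\circ$ and $2^\circ$ follow by direct manipulation of the corrector equation \eqref{Lambda_problem} (and in fact yield $g^{(1)}(\boldsymbol{\theta}) = 0$), while $3^\circ$ rests on an antiunitary symmetry argument.

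For $1^\circ$, I would first deduce $\Lambda \equiv 0$ from \eqref{g_verhnee}. Substitute $\Lambda$ into the weak form of \eqref{Lambda_problem}: since \eqref{g_verhnee} amounts to $\int_\Omega \langle g(\mathbf{x}) \mathbf{C}, b(\mathbf{D}) \boldsymbol{\eta} \rangle\, d\mathbf{x} = 0$ for every constant $\mathbf{C} \in \mathbb{C}^m$ and every $\boldsymbol{\eta} \in \widetilde{H}^p(\Omega; \mathbb{C}^n)$, the zero-order term disappears and there remains $\int_\Omega \langle g(\mathbf{x}) b(\mathbf{D})(\Lambda \mathbf{C}), b(\mathbf{D}) \boldsymbol{\eta} \rangle\, d\mathbf{x} = 0$ for all $\boldsymbol{\eta}$. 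Taking $\boldsymbol{\eta} = \Lambda \mathbf{C}$ and using $g > 0$ gives $b(\mathbf{D})(\Lambda \mathbf{C}) = 0$, hence $\Lambda \mathbf{C} \in \mathfrak{N}$ by \eqref{N_got}; together with $\int_\Omega \Lambda\, d\mathbf{x} = 0$ this yields $\Lambda \equiv 0$. Then $\widetilde{g} = g$ and $B_1(\boldsymbol{\theta}; \mathbf{D}) \Lambda = 0$, so $g^{(1)}(\boldsymbol{\theta}) = 0$ by \eqref{g1(theta)=}, and $G(\boldsymbol{\theta}) = 0$.

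For $2^\circ$, by statement $2^\circ$ of Proposition~\ref{special_cases} the relations \eqref{g_nizhnee} are equivalent to $g^0 = \underline{g}$, and then the Remark following Proposition~\ref{special_cases} gives that $\widetilde{g}(\mathbf{x}) \equiv g^0$ is constant. Pulling this constant matrix out of the integral in \eqref{g1(theta)=} reduces the claim to $\int_\Omega B_1(\boldsymbol{\theta}; \mathbf{D}) \Lambda(\mathbf{x})\, d\mathbf{x} = 0$. By \eqref{def_Xj}, $B_1(\boldsymbol{\theta}; \mathbf{D})$ is a homogeneous DO of order $p - 1 \geqslant 1$, so $B_1(\boldsymbol{\theta}; \mathbf{D}) \Lambda$ is a linear combination of derivatives $\mathbf{D}^\gamma \Lambda$ with $|\gamma| \geqslant 1$ of the $\Gamma$-periodic function $\Lambda$, and the integral over the cell $\Omega$ of each such derivative vanishes. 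Hence $g^{(1)}(\boldsymbol{\theta}) = 0$ and $G(\boldsymbol{\theta}) = 0$.

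For $3^\circ$, I would use the complex conjugation $J \colon \mathbf{u} \mapsto \overline{\mathbf{u}}$, an antiunitary involution on $\mathfrak{H} = L_2(\Omega; \mathbb{C}^n)$ and on $\mathfrak{H}_* = L_2(\Omega; \mathbb{C}^m)$. Since $g^{1/2}$, $b_\beta$, $\boldsymbol{\theta}$ are real and $\overline{\mathbf{D}^\gamma \mathbf{v}} = (-1)^{|\gamma|} \mathbf{D}^\gamma \overline{\mathbf{v}}$, and $X_j(\boldsymbol{\theta})$ of \eqref{def_Xj} has differential order $p - j$, we get $J X_0 J = (-1)^p X_0$, $J X_1(\boldsymbol{\theta}) J = (-1)^{p-1} X_1(\boldsymbol{\theta})$, $J X_p(\boldsymbol{\theta}) J = X_p(\boldsymbol{\theta})$, $J X_0^* J = (-1)^p X_0^*$; hence $J$ preserves $\mathfrak{N}$ and $\mathfrak{N}_*$, so $J P J = P$ and $J P_* J = P_*$. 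Tracing these through the definitions of Subsection~\ref{secZ,R,S} --- using the uniqueness of the solution of \eqref{1.4zz} and the identity $(J T J)^* = J T^* J$ valid for antiunitary $J$ --- one finds $J Z(\boldsymbol{\theta}) J = (-1)^p Z(\boldsymbol{\theta})$ and $J\, (R(\boldsymbol{\theta}) P)\, J = R(\boldsymbol{\theta}) P$. Substituting into \eqref{op_G} and using $(-1)^{p-1} (-1)^p = -1$, one obtains $J\, G(\boldsymbol{\theta})\, J = - G(\boldsymbol{\theta})$. On the other hand, for $n = 1$ the subspace $\mathfrak{N}$ is one-dimensional, so \eqref{G(theta)=} gives $G(\boldsymbol{\theta}) = c\, P$ with $c = b(\boldsymbol{\theta})^* g^{(1)}(\boldsymbol{\theta}) b(\boldsymbol{\theta}) \in \mathbb{R}$ (as $g^{(1)}(\boldsymbol{\theta})$ is Hermitian); since $J$ is antilinear and $J P J = P$, we have $J(c\, P) J = \overline{c}\, P = c\, P = G(\boldsymbol{\theta})$, and comparing with $J\, G(\boldsymbol{\theta})\, J = - G(\boldsymbol{\theta})$ forces $c = 0$, i.e.\ $G(\boldsymbol{\theta}) = 0$ for all $\boldsymbol{\theta} \in \mathbb{S}^{d-1}$. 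The main obstacle is case $3^\circ$: keeping careful track of the signs $(-1)^{p-j}$ and of adjoints under the antilinear $J$; the crux is that the parity mismatch between $X_0$ (and hence $Z(\boldsymbol{\theta})$) and $X_1(\boldsymbol{\theta})$ produces the minus sign in $J\, G(\boldsymbol{\theta})\, J = - G(\boldsymbol{\theta})$, which for $n = 1$ is incompatible with $G(\boldsymbol{\theta})$ being a real multiple of $P$ unless $G(\boldsymbol{\theta}) = 0$.
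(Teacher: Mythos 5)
Your proof is correct throughout. For $1^\circ$ and $2^\circ$ you follow essentially the same route as the paper: in $1^\circ$ you simply spell out the ``obvious'' deduction of $\Lambda\equiv 0$ from \eqref{g_verhnee} (test the weak form of \eqref{Lambda_problem} with $\boldsymbol\eta=\Lambda\mathbf{C}$, use $g>0$ and the normalization $\int_\Omega\Lambda=0$), and in $2^\circ$ you use, exactly as the paper does, that under \eqref{g_nizhnee} the matrix $\widetilde g$ is constant and the cell-average of the derivatives $B_1(\boldsymbol\theta;\mathbf D)\Lambda$ vanishes.

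For $3^\circ$ your argument is a genuinely different packaging of the same underlying conjugation symmetry. The paper works concretely with the matrix-valued objects: it determines the parity (real vs.\ purely imaginary) of $\Lambda$, $\widetilde g$, and $B_1(\boldsymbol\theta;\mathbf D)\Lambda$ according to the parity of $p$, concludes that $g^{(1)}(\boldsymbol\theta)$ is a Hermitian matrix with purely imaginary entries, and then observes that $b(\boldsymbol\theta)^* g^{(1)}(\boldsymbol\theta) b(\boldsymbol\theta)$ is a $1\times1$ Hermitian imaginary matrix, hence zero. You instead lift the computation to the abstract operator level: introduce the antiunitary conjugation $J$, show $JX_jJ=(-1)^{p-j}X_j$, $JPJ=P$, $JP_*J=P_*$, propagate this to $JZJ=(-1)^pZ$ and $J(RP)J=RP$, and deduce $JGJ=-G$; combined with $G=cP$, $c\in\mathbb R$ (so $JGJ=G$), this forces $c=0$. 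The two proofs encode the same information, but yours avoids the $p$-even/$p$-odd case split by encapsulating it in the sign $(-1)^{p-j}$, at the cost of having to justify the intertwining relations for $Z$ and $R$ (which you do correctly, via uniqueness in \eqref{1.4zz} and $(JTJ)^*=JT^*J$). The paper's version is more self-contained and hands-on; yours is slightly more conceptual and would adapt more readily to other quantities built from the $X_j$, $Z$, $R$.
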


\begin{proof}
Obviously, if relations \eqref{g_verhnee} are satisfied, then the solution $\Lambda(\mathbf{x})$ of problem 
\eqref{Lambda_problem} is equal to zero. From \eqref{g1(theta)=} it follows that 
$g^{(1)} (\boldsymbol{\theta})=0$. Then, by \eqref{G(theta)=}, \hbox{$G(\boldsymbol{\theta})=0$}.

If relations \eqref{g_nizhnee} are satisfied, then $\widetilde{g}(\mathbf{x}) = g^0 = \underline{g}$. 
Since the integral over the cell of the derivatives of a periodic function is equal to zero, then
$$
\int_\Omega B_1(\boldsymbol{\theta}; \mathbf{D}) \Lambda(\mathbf{x}) \, d\mathbf{x} =0
$$
and hence, we have $g^{(1)} (\boldsymbol{\theta})=0$.  Consequently, by \eqref{G(theta)=}, \hbox{$G(\boldsymbol{\theta})=0$}.

Now, suppose that $n=1$ and the matrices $g(\mathbf{x})$, $b_\beta$, $|\beta|=p$, have real entries.
Then, for $p$ even, the solution   $\Lambda(\mathbf{x})$ of problem 
\eqref{Lambda_problem} is a $(1 \times m)$-matrix with real entries. Hence, the matrix 
$\widetilde{g}(\mathbf{x})= g(\mathbf{x}) (b(\mathbf{D}) \Lambda(\mathbf{x}) + \mathbf{1}_m)$
is an $(m \times m)$-matrix with real entries. Next,  $B_1(\boldsymbol{\theta}; \mathbf{D}) \Lambda(\mathbf{x})$
is an $(m \times m)$-matrix with imaginary entries. By \eqref{g1(theta)=},
$g^{(1)}(\boldsymbol{\theta})$ is a Hermitian $(m\times m)$-matrix with imaginary entries. 
Consequently,
$b(\boldsymbol{\theta})^* g^{(1)}(\boldsymbol{\theta})b(\boldsymbol{\theta})$ is equal to zero, as
a Hermitian imaginary $(1\times 1)$-matrix.  

For $p$ odd, the solution   $\Lambda(\mathbf{x})$ of problem 
\eqref{Lambda_problem} is a $(1 \times m)$-matrix with imaginary entries. The matrix 
$\widetilde{g}(\mathbf{x})$ has real entries. Next,  $B_1(\boldsymbol{\theta}; \mathbf{D}) \Lambda(\mathbf{x})$
is an $(m \times m)$-matrix with imaginary entries. Hence,
$g^{(1)}(\boldsymbol{\theta})$ is a Hermitian $(m\times m)$-matrix with imaginary entries. 
Again,
$b(\boldsymbol{\theta})^* g^{(1)}(\boldsymbol{\theta})b(\boldsymbol{\theta})$ is equal to zero, as
a Hermitian imaginary $(1\times 1)$-matrix.  
\end{proof}

\begin{rem}
In the general case, the operator $G(\boldsymbol{\theta})$ may be non-zero. 
In particular, it is easy to give examples of the scalar operator $A= b(\mathbf{D})^* g(\mathbf{x}) b(\mathbf{D})$ (i.~e., $n=1$), where $g(\mathbf{x})$ is a Hermitian matrix with complex entries, such that the corresponding operator $G(\boldsymbol{\theta})$ is not zero. 
\end{rem}

\subsection{Approximation for the operator exponential of $A(\mathbf{k})$}

In $L_2(\mathbb{R}^d; \mathbb{C}^n)$, consider the operator $H_0 := - \Delta$. 
Let $H_0(\mathbf{k})$ be the operator family in $L_2(\Omega;\mathbb{C}^n)$ corresponding to the operator $H_0$.
Then $H_0(\mathbf{k})$ is given by the differential expression 
$ |\mathbf{D} + \mathbf{k}|^2$ on the domain $\widetilde{H}^{2}(\Omega;\mathbb{C}^n)$. 
Denote
\begin{equation}
\label{op_R0}
R_0(\mathbf{k},\varepsilon) := \varepsilon^2 (H_0(\mathbf{k}) + \varepsilon^2 I)^{-1}. 
\end{equation}
Clearly, we have
\begin{equation}
\label{op_R0_identity}
R_0(\mathbf{k},\varepsilon)^{s/2} P := \varepsilon^s (t^2  + \varepsilon^2 )^{-s/2} P,\quad t=|\mathbf{k}|,\ \ s>0. 
\end{equation}

We apply  Theorem \ref{th_est3} to the operator family $A(t, \boldsymbol{\theta}) = A(\mathbf{k})$. 
Note that, by \eqref{eff_op_symbol}, \eqref{eff_op2}, 
$
e^{-i \tau t^{2p} S( \boldsymbol{\theta})P} P = e^{-i \tau A^0(\mathbf{k})} P.
$
Thus, the operator \eqref{J_def} turns into  
$$
J(\mathbf{k}, \tau) := \left( e^{-i \tau A(\mathbf{k})} - e^{-i \tau A^0(\mathbf{k})}\right)P.
$$
It remains to implement the values of the constants in estimates. The constants $\delta$ and $t_0$
are given by  \eqref{delta_fix} and \eqref{t0_fix}, respectively; they do not depend on $\boldsymbol{\theta}$.
The constant $C_0$ is given by \eqref{C0}. 
Using \eqref{Xp_est}, we can replace the constant 
$
C_T(\boldsymbol{\theta}) = p C_0^2 + \| X_p(\boldsymbol{\theta})\|^2 \delta^{-1}
$
depending now on $\boldsymbol{\theta}$ by
$
C_T= p C_0^2  + \alpha_1 \|g\|_{L_\infty} \delta^{-1}.
$
According to \eqref{1.5zz}, \eqref{1.6zz}, we put
$C_2= C(p) C_T$, $C_3= C(p) C_T^{2p+1}.$

Now, applying \eqref{exp_est1} and taking \eqref{op_R0_identity} into account, we obtain 
\begin{equation}
\label{exp_est1_DO}
\left\|  J( \mathbf{k}, \tau \varepsilon^{-2p}) R_0(\mathbf{k},\varepsilon)^{p+1/2} P 
\right\|_{L_2(\Omega) \to L_2(\Omega)}\! 
\leqslant \!( C_2  + C_3 |\tau|) \varepsilon, \quad \tau \in \mathbb{R}, \ \; \varepsilon >0,\ \; |\mathbf{k}| \leqslant t_0.
\end{equation}
For $\mathbf{k} \in \widetilde{\Omega}$, $|\mathbf{k}|>t_0$, estimates are trivial. 
Obviously, $\| J(\mathbf{k}, \tau)\| \leqslant 2$ and \hbox{$\|R_0(\mathbf{k},\varepsilon)\| \leqslant 1$}.
By \eqref{op_R0_identity},
\begin{equation*}
\left\|R_0(\mathbf{k},\varepsilon)^{1/2} P \right\|_{L_2(\Omega) \to L_2(\Omega)} \leqslant t_0^{-1} \varepsilon,
\quad \mathbf{k} \in \widetilde{\Omega}, \ \; |\mathbf{k}|>t_0,\ \; \varepsilon >0.
\end{equation*}
Hence,
\begin{equation}
\label{exp_est2_DO}
\left\|  J( \mathbf{k}, \tau \varepsilon^{-2p}) R_0(\mathbf{k},\varepsilon)^{p+1/2} P \right\|_{L_2(\Omega) \to L_2(\Omega)}  \! \leqslant \! 2 \, t_0^{-1} \varepsilon, \quad \tau \in \mathbb{R}, \ \,\mathbf{k} \in \widetilde{\Omega}, 
\ \,|\mathbf{k}|>t_0,\ \varepsilon >0.
\end{equation}
Finally, let us show that, within the margin of error, the projection $P$ in estimates \eqref{exp_est1_DO} and \eqref{exp_est2_DO} can be removed. Indeed, using the discrete Fourier transform, we have
\begin{equation}
\label{R0(I-P)}
\left\| R_0(\mathbf{k},\varepsilon)^{1/2}(I-P) \right\|_{L_2(\Omega) \to L_2(\Omega)} \! = \! \max_{0 \ne \mathbf{b} \in \widetilde{\Gamma}} 
\varepsilon (|\mathbf{b} + \mathbf{k}|^2 + \varepsilon^2)^{-1/2} \! \leqslant \! r_0^{-1} \varepsilon,
\quad \mathbf{k} \in \widetilde{\Omega}, \ \;\varepsilon >0.
\end{equation}
Consequently,
\begin{equation}
\label{exp_est3_DO}
\begin{aligned}
\left\|  \left( e^{-i \tau \varepsilon^{-2p} A(\mathbf{k})} - e^{-i \tau \varepsilon^{-2p} A^0(\mathbf{k})}\right)
 R_0(\mathbf{k},\varepsilon)^{p+1/2} (I-P) \right\|_{L_2(\Omega) \to L_2(\Omega)} 
\! \leqslant \! 2 \, r_0^{-1} \varepsilon, 
\\ 
\tau \in \mathbb{R}, \ \; \mathbf{k} \in \widetilde{\Omega}, \ \; \varepsilon >0.
\end{aligned}
\end{equation}

Combining \eqref{exp_est1_DO}, \eqref{exp_est2_DO}, and \eqref{exp_est3_DO}, we arrive at the following result.

\begin{thm}\label{th_A(k)}
For $\tau \in {\mathbb R}$, $\varepsilon >0$, and $\mathbf{k} \in \widetilde{\Omega}$
we have
\begin{equation*}
\left\|  \left( e^{-i \tau \varepsilon^{-2p} A(\mathbf{k})} - e^{-i \tau \varepsilon^{-2p} A^0(\mathbf{k})}\right)
 R_0(\mathbf{k},\varepsilon)^{p+1/2}  \right\|_{L_2(\Omega) \to L_2(\Omega)} 
\leqslant {\mathfrak C}_1 (1+|\tau|) \varepsilon.
\end{equation*}
The constant ${\mathfrak C}_1$ depends only on $d$, $p$, $\alpha_0$, $\alpha_1$, $\|g\|_{L_\infty}$, $\|g^{-1}\|_{L_\infty}$,
and the parameters of the lattice $\Gamma$. 
\end{thm}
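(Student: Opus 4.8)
The plan is to split the Brillouin zone $\widetilde{\Omega}$ into the threshold region $\{\mathbf{k} \in \widetilde{\Omega} : |\mathbf{k}| \leqslant t_0\}$ and its complement $\{\mathbf{k} \in \widetilde{\Omega} : |\mathbf{k}| > t_0\}$, and on each region to decompose the smoothing factor by inserting $I = P + (I-P)$. The four resulting pieces are exactly the ones controlled by the estimates already assembled before the statement, so it remains only to add them up and keep track of the constants.

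On the region $|\mathbf{k}| \leqslant t_0$ I would write
\begin{equation*}
\bigl( e^{-i \tau \varepsilon^{-2p} A(\mathbf{k})} - e^{-i \tau \varepsilon^{-2p} A^0(\mathbf{k})}\bigr) R_0(\mathbf{k},\varepsilon)^{p+1/2} = \mathrm{(I)} + \mathrm{(II)},
\end{equation*}
where $\mathrm{(I)}$ carries the factor $P$ and $\mathrm{(II)}$ the factor $I-P$. Since $R_0(\mathbf{k},\varepsilon)^{p+1/2}P = \varepsilon^{2p+1}(t^2+\varepsilon^2)^{-(p+1/2)}P$ by \eqref{op_R0_identity}, this scalar multiple of $P$ commutes with the exponentials, so $\mathrm{(I)} = J(\mathbf{k}, \tau\varepsilon^{-2p})\,R_0(\mathbf{k},\varepsilon)^{p+1/2}P$ and \eqref{exp_est1_DO} gives $\|\mathrm{(I)}\| \leqslant (C_2 + C_3|\tau|)\varepsilon$, while \eqref{exp_est3_DO} gives $\|\mathrm{(II)}\| \leqslant 2 r_0^{-1}\varepsilon$. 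On the complementary region $|\mathbf{k}| > t_0$ the same splitting is used, but now $\mathrm{(I)}$ is estimated by the trivial bound \eqref{exp_est2_DO}, namely $\|\mathrm{(I)}\| \leqslant 2 t_0^{-1}\varepsilon$ (using $\|J(\mathbf{k},\cdot)\| \leqslant 2$, $\|R_0(\mathbf{k},\varepsilon)\| \leqslant 1$, and $\|R_0(\mathbf{k},\varepsilon)^{1/2}P\| \leqslant t_0^{-1}\varepsilon$ there), whereas $\mathrm{(II)}$ is again controlled by \eqref{exp_est3_DO}. Applying the triangle inequality in each case and then taking the maximum over the two regions yields the assertion with a constant ${\mathfrak C}_1$ of the form $C_2 + C_3 + 2(r_0^{-1} + t_0^{-1})$.

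Finally I would verify the claim on the dependence of ${\mathfrak C}_1$: unwinding \eqref{C0}, \eqref{delta_fix}, \eqref{C1_fix}, \eqref{t0_fix}, the definition $C_T = p C_0^2 + \alpha_1 \|g\|_{L_\infty}\delta^{-1}$, and $C_2 = C(p) C_T$, $C_3 = C(p) C_T^{2p+1}$, one sees that every ingredient depends only on $d$, $p$, $\alpha_0$, $\alpha_1$, $\|g\|_{L_\infty}$, $\|g^{-1}\|_{L_\infty}$, and on the lattice through $r_0$. There is no genuine analytic obstacle at this stage — the substance was in Proposition \ref{prop1.1} and Theorem \ref{th_est3}. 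The single point that needs care is uniformity in the direction $\boldsymbol{\theta} = \mathbf{k}/|\mathbf{k}|$: one must make sure that $t_0$ and all the constants $C_0$, $C_T$, $C_2$, $C_3$ have already been chosen independently of $\boldsymbol{\theta}$ (which is possible thanks to the uniform bound \eqref{Xp_est} for $\|X_p(\boldsymbol{\theta})\|$ together with the explicit formula \eqref{C0}), so that the splitting point $t_0$ and the final estimate are the same for every $\boldsymbol{\theta} \in \mathbb{S}^{d-1}$.
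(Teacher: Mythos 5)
Your proof is correct and follows the paper's own route exactly: the paper's text immediately preceding the statement sets up the three estimates \eqref{exp_est1_DO}, \eqref{exp_est2_DO}, \eqref{exp_est3_DO} and simply says to ``combine'' them, which is precisely the region-splitting $\{|\mathbf{k}|\leqslant t_0\}\cup\{|\mathbf{k}|>t_0\}$ and the insertion of $I=P+(I-P)$ that you carry out explicitly. Your bookkeeping of the constant $\mathfrak{C}_1 = C_2 + C_3 + 2(r_0^{-1}+t_0^{-1})$ and the check of its dependence via \eqref{C0}, \eqref{delta_fix}, \eqref{C1_fix}, \eqref{t0_fix}, \eqref{Xp_est} fills in the details the paper leaves implicit, and your remark about uniformity in $\boldsymbol{\theta}$ correctly identifies the one subtlety already handled in the text.
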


Note that, by \eqref{op_R0_identity}, 
\begin{equation}
\label{R0P_t>t0_2}
\|R_0(\mathbf{k},\varepsilon) P \| \leqslant t_0^{-2} \varepsilon^2,
\quad \mathbf{k} \in \widetilde{\Omega}, \ \; |\mathbf{k}|>t_0,\ \; \varepsilon >0.
\end{equation}
Similarly to \eqref{R0(I-P)}, we have
\begin{equation}
\label{R0(I-P)_2}
\| R_0(\mathbf{k},\varepsilon) (I-P)\| = \max_{0 \ne \mathbf{b} \in \widetilde{\Gamma}} 
\varepsilon^2 (|\mathbf{b} + \mathbf{k}|^2 + \varepsilon^2)^{-1} \leqslant r_0^{-2} \varepsilon^2,
\quad \mathbf{k} \in \widetilde{\Omega}, \ \; \varepsilon >0.
\end{equation}
Applying Theorem \ref{th_est3_improved}
 and using \eqref{R0P_t>t0_2}, \eqref{R0(I-P)_2}, we arrive at the following result.

\begin{thm}\label{th_A(k)_G=0_2}
Let $G (\boldsymbol{\theta})$ be the operator given by  \eqref{G(theta)=}, \eqref{g1(theta)=}.
Suppose that $G (\boldsymbol{\theta})=0$ for any $\boldsymbol{\theta} \in \mathbb{S}^{d-1}$.
Then for $\tau \in {\mathbb R}$, $\varepsilon >0$, and $\mathbf{k} \in \widetilde{\Omega}$
we have
\begin{equation*}
\left\|  \left( e^{-i \tau \varepsilon^{-2p} A(\mathbf{k})} - e^{-i \tau \varepsilon^{-2p} A^0(\mathbf{k})}\right)
 R_0(\mathbf{k},\varepsilon)^{p+1}  \right\|_{L_2(\Omega) \to L_2(\Omega)} 
\leqslant {\mathfrak C}_{2} (1+|\tau|) \varepsilon^2.
\end{equation*}
The constant ${\mathfrak C}_{2}$ depends only on $d$, $p$, $\alpha_0$, $\alpha_1$, $\|g\|_{L_\infty}$, $\|g^{-1}\|_{L_\infty}$,
and the parameters of the lattice $\Gamma$. 
\end{thm}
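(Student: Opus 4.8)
The plan is to repeat verbatim the argument that precedes Theorem~\ref{th_A(k)}, with two changes: replace Theorem~\ref{th_est3} by its refinement Theorem~\ref{th_est3_improved}, and replace the smoothing exponent $p+1/2$ by $p+1$. First I would record that, by \eqref{G(theta)=}--\eqref{g1(theta)=}, the abstract operator $G$ of \eqref{op_G} attached to the family $A(t,\boldsymbol{\theta})=A(\mathbf{k})$ coincides with $G(\boldsymbol{\theta}) = b(\boldsymbol{\theta})^* g^{(1)}(\boldsymbol{\theta}) b(\boldsymbol{\theta}) P$; hence the hypothesis ``$G(\boldsymbol{\theta})=0$ for all $\boldsymbol{\theta}\in\mathbb{S}^{d-1}$'' is exactly the condition $G=0$ required in Theorem~\ref{th_est3_improved}. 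All constants entering the abstract bounds are $\boldsymbol{\theta}$-independent: $C_0$ from \eqref{C0}; after \eqref{Xp_est}, the constant $C_T = pC_0^2 + \alpha_1\|g\|_{L_\infty}\delta^{-1}$, hence $C_4 = C(p)C_T^p$ and $C_5 = C(p)C_T^{2p+2}$; and $\delta$, $t_0$ from \eqref{delta_fix}, \eqref{t0_fix}. Each depends only on $d$, $p$, $\alpha_0$, $\alpha_1$, $\|g\|_{L_\infty}$, $\|g^{-1}\|_{L_\infty}$ and the lattice.

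Next I would split $\widetilde{\Omega}$ into $\{|\mathbf{k}|\leqslant t_0\}$ and $\{|\mathbf{k}|>t_0\}$ and estimate the $P$-part and the $(I-P)$-part of the operator in the statement separately, as in \eqref{exp_est1_DO}--\eqref{exp_est3_DO}. Since by \eqref{eff_op2} one has $e^{-i\tau t^{2p}S(\boldsymbol{\theta})P}P = e^{-i\tau A^0(\mathbf{k})}P$, the abstract $J$ of \eqref{J_def} becomes $J(\mathbf{k},\tau) = \bigl(e^{-i\tau A(\mathbf{k})} - e^{-i\tau A^0(\mathbf{k})}\bigr)P$. For $|\mathbf{k}|\leqslant t_0$, applying \eqref{exp_est1_improved5} together with the identity \eqref{op_R0_identity} for $s=2p+2$ gives $\bigl\|J(\mathbf{k},\tau\varepsilon^{-2p}) R_0(\mathbf{k},\varepsilon)^{p+1} P\bigr\| \leqslant \bigl(2C_4 t_0^{p-2} + C_5|\tau|\bigr)\varepsilon^2$, where $p\geqslant 2$ makes the factor $t_0^{p-2}$ harmless. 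For $|\mathbf{k}|>t_0$, the crude bounds $\|J(\mathbf{k},\tau)\|\leqslant 2$, $\|R_0(\mathbf{k},\varepsilon)\|\leqslant 1$ and \eqref{R0P_t>t0_2} give $\bigl\|J(\mathbf{k},\tau\varepsilon^{-2p}) R_0(\mathbf{k},\varepsilon)^{p+1} P\bigr\| \leqslant 2 t_0^{-2}\varepsilon^2$. Finally, since $R_0(\mathbf{k},\varepsilon)$ commutes with $P$, one has $R_0(\mathbf{k},\varepsilon)^{p+1}(I-P) = R_0(\mathbf{k},\varepsilon)^p\bigl(R_0(\mathbf{k},\varepsilon)(I-P)\bigr)$ with $\|R_0(\mathbf{k},\varepsilon)^p\|\leqslant 1$ and $\|R_0(\mathbf{k},\varepsilon)(I-P)\|\leqslant r_0^{-2}\varepsilon^2$ by \eqref{R0(I-P)_2}, whence $\bigl\|\bigl(e^{-i\tau\varepsilon^{-2p}A(\mathbf{k})} - e^{-i\tau\varepsilon^{-2p}A^0(\mathbf{k})}\bigr) R_0(\mathbf{k},\varepsilon)^{p+1}(I-P)\bigr\| \leqslant 2 r_0^{-2}\varepsilon^2$, uniformly in $\mathbf{k}\in\widetilde{\Omega}$.

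Adding the $P$- and $(I-P)$-contributions and taking the maximum over the two $\mathbf{k}$-regions, I would set $\mathfrak{C}_2 := \bigl(2C_4 t_0^{p-2} + C_5\bigr) + 2 t_0^{-2} + 2 r_0^{-2}$, which by the above depends only on $d$, $p$, $\alpha_0$, $\alpha_1$, $\|g\|_{L_\infty}$, $\|g^{-1}\|_{L_\infty}$ and the lattice parameters; this yields the claimed estimate. There is no genuine obstacle here: the analytic heart of the matter (the refined threshold approximation \eqref{1.6yy} and its dynamical consequence \eqref{exp_est1_improved5}) is already in hand, so the argument is essentially bookkeeping. The only point requiring care is to invoke $G(\boldsymbol{\theta})=0$ for \emph{every} $\boldsymbol{\theta}$, so that \eqref{exp_est1_improved5} holds with constants uniform in $\boldsymbol{\theta}$ and the subsequent direct-integral assembly goes through; Proposition~\ref{prop_cond_G=0} guarantees this uniform vanishing in the cases of interest ($g^0=\overline{g}$, $g^0=\underline{g}$, or scalar operators with real coefficients).
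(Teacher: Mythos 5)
Your proposal is correct and follows exactly the route the paper takes: apply Theorem~\ref{th_est3_improved} for $|\mathbf{k}|\leqslant t_0$ via the identity \eqref{op_R0_identity}, use the trivial bound together with \eqref{R0P_t>t0_2} for $|\mathbf{k}|>t_0$, and dispose of the $(I-P)$-part with \eqref{R0(I-P)_2}, all with $\boldsymbol{\theta}$-uniform constants. The paper leaves this bookkeeping implicit; your write-up simply makes the same steps explicit.
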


\section{Approximation for the operator exponential of $A$}

Let $A$ be the operator in $L_2(\mathbb{R}^d;\mathbb{C}^n)$ given by 
$A= b(\mathbf{D})^* g(\mathbf{x}) b(\mathbf{D})$; see  \eqref{op_A}.  
Let $A^0 = b(\mathbf{D})^* g^0 b(\mathbf{D})$ be the effective operator \eqref{eff_op}.
Recall the notation $H_0 = - \Delta$ and denote
\begin{equation}
\label{ROeps}
R_0(\varepsilon):= \varepsilon^2 ( H_0 + \varepsilon^2 I)^{-1}.
\end{equation}

From expansion \eqref{direct2} it follows that
$$
e^{-i \tau \varepsilon^{-2p} A} = {\mathcal U}^{-1} \Bigl( \int_{\widetilde{\Omega}} \oplus 
e^{-i \tau \varepsilon^{-2p} A(\mathbf{k})} \, d \mathbf{k} \Bigr) {\mathcal U}.
$$
The operator $e^{-i \tau \varepsilon^{-2p} A^0}$ admits a similar expansion.
The operator \eqref{ROeps} is decomposed into the direct integral of the operators \eqref{op_R0}:
$$
R_0( \varepsilon) = {\mathcal U}^{-1} \Bigl( \int_{\widetilde{\Omega}} \oplus 
R_0 ( \mathbf{k}, \varepsilon)  \, d \mathbf{k} \Bigr) {\mathcal U}.
$$
From these direct integral expansions, taking into account that $\mathcal U$ is unitary, we obtain
$$
\begin{aligned}
&\left\|  \left( e^{-i \tau \varepsilon^{-2p} A} - e^{-i \tau \varepsilon^{-2p} A^0}\right)
 R_0(\varepsilon)^{s/2}  \right\|_{L_2(\mathbb{R}^d) \to L_2(\mathbb{R}^d)} 
 \\
 & = \sup_{\mathbf{k} \in \widetilde{\Omega}} \left\|  \left( e^{-i \tau \varepsilon^{-2p} A(\mathbf{k})} - e^{-i \tau \varepsilon^{-2p} A^0(\mathbf{k})}\right)
 R_0(\mathbf{k},\varepsilon)^{s/2}  \right\|_{L_2(\Omega) \to L_2(\Omega)}.
\end{aligned}
$$
Combining this with Theorem \ref{th_A(k)}, we arrive at the following result.

\begin{thm}\label{th_A}
For $\tau \in {\mathbb R}$ and $\varepsilon >0$ we have
\begin{equation}
\label{exp_est4_DO}
\left\|  \left( e^{-i \tau \varepsilon^{-2p} A} - e^{-i \tau \varepsilon^{-2p} A^0}\right)
 R_0(\varepsilon)^{p+1/2}  \right\|_{L_2(\mathbb{R}^d) \to L_2(\mathbb{R}^d)} 
\leqslant {\mathfrak C}_1 (1+|\tau|) \varepsilon.
\end{equation}
\end{thm}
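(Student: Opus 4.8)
The plan is to transfer the fiberwise bound of Theorem~\ref{th_A(k)} to the whole space $\mathbb{R}^d$ by means of the Gelfand transform. First I would note that, by the direct integral expansion \eqref{direct2} and the analogous expansion for $A^0$, the operator exponentials $e^{-i\tau\varepsilon^{-2p}A}$ and $e^{-i\tau\varepsilon^{-2p}A^0}$ are decomposed by $\mathcal{U}$ into direct integrals over $\widetilde{\Omega}$ with fibers $e^{-i\tau\varepsilon^{-2p}A(\mathbf{k})}$ and $e^{-i\tau\varepsilon^{-2p}A^0(\mathbf{k})}$, while the smoothing operator $R_0(\varepsilon)$ is likewise decomposed with fibers $R_0(\mathbf{k},\varepsilon)$. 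This rests on the general fact that a Borel function of an operator given as a direct integral is itself the direct integral of the corresponding functions of the fiber operators.

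Since a composition of decomposable operators is decomposable with fiber equal to the product of the fibers, the operator $\bigl(e^{-i\tau\varepsilon^{-2p}A}-e^{-i\tau\varepsilon^{-2p}A^0}\bigr)R_0(\varepsilon)^{p+1/2}$ is, after conjugation by the unitary $\mathcal{U}$, the direct integral of the fiber operators $\bigl(e^{-i\tau\varepsilon^{-2p}A(\mathbf{k})}-e^{-i\tau\varepsilon^{-2p}A^0(\mathbf{k})}\bigr)R_0(\mathbf{k},\varepsilon)^{p+1/2}$. Because the norm of a direct integral of bounded operators equals the essential supremum of the fiber norms and $\mathcal{U}$ is unitary, the $L_2(\mathbb{R}^d)$-norm on the left of \eqref{exp_est4_DO} equals $\sup_{\mathbf{k}\in\widetilde{\Omega}}$ of the corresponding $L_2(\Omega)$-norms. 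It then remains to apply Theorem~\ref{th_A(k)}, which bounds each fiber norm by $\mathfrak{C}_1(1+|\tau|)\varepsilon$, and pass to the supremum over $\mathbf{k}\in\widetilde{\Omega}$, which yields \eqref{exp_est4_DO}.

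The only delicate point is that the constant in Theorem~\ref{th_A(k)} is genuinely uniform in $\mathbf{k}$, i.e.\ independent of the direction $\boldsymbol{\theta}=\mathbf{k}/|\mathbf{k}|$; this was already arranged in Section~3 by choosing $\delta$, $t_0$, $C_0$, $C_T$ (hence $C_2$, $C_3$) via the $\boldsymbol{\theta}$-free bounds \eqref{Xp_est}, \eqref{C0}, \eqref{d0_est}, and by treating the regimes $|\mathbf{k}|\leqslant t_0$ and $|\mathbf{k}|>t_0$ separately in \eqref{exp_est1_DO}--\eqref{exp_est3_DO}. Granting that uniformity, the argument is the standard passage from a fiberwise estimate to an estimate on the whole space, so I expect no genuine obstacle beyond bookkeeping of the constant.
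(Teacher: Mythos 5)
Your proposal is correct and matches the paper's own argument: the paper likewise passes to the direct integral $\mathcal{U} (\cdot) \mathcal{U}^{-1}$ for the exponentials and for $R_0(\varepsilon)$, uses that the norm equals the supremum over $\mathbf{k}\in\widetilde{\Omega}$ of the fiber norms, and concludes from Theorem~\ref{th_A(k)}. The remark about the $\boldsymbol{\theta}$-uniformity of the constants is the right thing to flag, and indeed it is already secured in Section~3.
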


 Similarly,  Theorem  \ref{th_A(k)_G=0_2} implies the following result.
 
\begin{thm}\label{th_A_G=0_2}
Let $G (\boldsymbol{\theta})$ be the operator given by  \eqref{G(theta)=}, \eqref{g1(theta)=}.
Suppose that $G (\boldsymbol{\theta})=0$ for any $\boldsymbol{\theta} \in \mathbb{S}^{d-1}$.
Then for $\tau \in {\mathbb R}$ and $\varepsilon >0$
we have
\begin{equation*}
\left\|  \left( e^{-i \tau \varepsilon^{-2p} A} - e^{-i \tau \varepsilon^{-2p} A^0}\right)
 R_0(\varepsilon)^{p+1}  \right\|_{L_2(\mathbb{R}^d) \to L_2(\mathbb{R}^d)} 
\leqslant {\mathfrak C}_2 (1+|\tau|) \varepsilon^2.
\end{equation*}
\end{thm}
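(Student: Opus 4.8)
The plan is to mirror exactly the derivation of Theorem~\ref{th_A} from Theorem~\ref{th_A(k)}, but now starting from Theorem~\ref{th_A(k)_G=0_2} instead. The hypothesis $G(\boldsymbol{\theta})=0$ for all $\boldsymbol{\theta}\in\mathbb{S}^{d-1}$ is precisely the hypothesis needed to invoke Theorem~\ref{th_A(k)_G=0_2}, so there is nothing new to verify at the level of the fiber operators. First I would recall the direct integral expansions already established: by \eqref{direct2} the operator $A$ decomposes as ${\mathcal U}A{\mathcal U}^{-1}=\int_{\widetilde{\Omega}}^{\oplus}A(\mathbf{k})\,d\mathbf{k}$, and consequently $e^{-i\tau\varepsilon^{-2p}A}={\mathcal U}^{-1}\bigl(\int_{\widetilde{\Omega}}^{\oplus}e^{-i\tau\varepsilon^{-2p}A(\mathbf{k})}\,d\mathbf{k}\bigr){\mathcal U}$, and likewise for $A^0$ and for the resolvent factor $R_0(\varepsilon)$ from \eqref{ROeps}, which decomposes into the fibers $R_0(\mathbf{k},\varepsilon)$ of \eqref{op_R0}.

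Next I would use the unitarity of the Gelfand transform ${\mathcal U}$ to pass the operator norm through the direct integral: for the composed operator $\bigl(e^{-i\tau\varepsilon^{-2p}A}-e^{-i\tau\varepsilon^{-2p}A^0}\bigr)R_0(\varepsilon)^{p+1}$, its $L_2(\mathbb{R}^d)\to L_2(\mathbb{R}^d)$ norm equals $\sup_{\mathbf{k}\in\widetilde{\Omega}}$ of the $L_2(\Omega)\to L_2(\Omega)$ norms of the corresponding fiber operators $\bigl(e^{-i\tau\varepsilon^{-2p}A(\mathbf{k})}-e^{-i\tau\varepsilon^{-2p}A^0(\mathbf{k})}\bigr)R_0(\mathbf{k},\varepsilon)^{p+1}$ — this is the same identity used just above in the proof of Theorem~\ref{th_A}, now with exponent $s/2=p+1$ in place of $p+1/2$. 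The point is that the exponential and the resolvent act fiberwise and commute with ${\mathcal U}$, so the essential supremum over $\mathbf{k}$ of the fiber norms is the norm of the decomposable operator.

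Finally I would apply Theorem~\ref{th_A(k)_G=0_2} uniformly in $\mathbf{k}\in\widetilde{\Omega}$: that theorem gives the bound ${\mathfrak C}_2(1+|\tau|)\varepsilon^2$ for every fiber, with ${\mathfrak C}_2$ independent of $\mathbf{k}$ (it depends only on $d$, $p$, $\alpha_0$, $\alpha_1$, $\|g\|_{L_\infty}$, $\|g^{-1}\|_{L_\infty}$, and the lattice), so taking the supremum over $\mathbf{k}$ preserves the estimate and yields exactly the claimed inequality. There is essentially no obstacle here: the whole content — the threshold analysis, the splitting into $|\mathbf{k}|\leqslant t_0$ and $|\mathbf{k}|>t_0$, the removal of the projection $P$ via \eqref{R0(I-P)_2} — has already been carried out at the fiber level in Section~3. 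The only thing to be careful about is that the constant in Theorem~\ref{th_A(k)_G=0_2} is genuinely $\mathbf{k}$-uniform (which it is, since all the constants $\delta$, $t_0$, $C_T$, $C_4$, $C_5$ were arranged to be independent of $\boldsymbol{\theta}$), so that the passage to the supremum costs nothing.
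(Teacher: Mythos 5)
Your proposal is correct and follows exactly the same route as the paper: the paper simply states that Theorem~\ref{th_A_G=0_2} follows from Theorem~\ref{th_A(k)_G=0_2} "similarly" to the derivation of Theorem~\ref{th_A} from Theorem~\ref{th_A(k)}, i.e.\ via the direct integral decomposition, unitarity of the Gelfand transform, and the $\mathbf{k}$-uniformity of the fiber bound. You have correctly identified the only point requiring care — uniformity of the constant in $\mathbf{k}$ (equivalently in $\boldsymbol{\theta}$) — and the argument is complete.
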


\section{Homogenization of the Schr{\"o}dinger-type equation}

\subsection{The operator $A_\varepsilon$. The scaling transformation}

We use the notation $g^\varepsilon(\mathbf{x}):= g(\varepsilon^{-1} \mathbf{x})$, $\varepsilon>0$.
Our main object is the operator $A_\varepsilon$ acting in $L_2(\mathbb{R}^d;\mathbb{C}^n)$ and formally given by
\begin{equation}
\label{op_Aeps}
A_\varepsilon = b(\mathbf{D})^* g^\varepsilon(\mathbf{x}) b(\mathbf{D}).
\end{equation}
The precise definition of $A_\varepsilon$ is given in terms of the corresponding quadratic form; cf. Subsection \ref{sec_operator}.
Our goal is to  approximate  the operator exponential $e^{-i \tau A_\varepsilon}$ for small~$\varepsilon$.

Let $T_\varepsilon$ be the scaling transformation defined by 
$(T_\varepsilon \mathbf{u})(\mathbf{x})= \varepsilon^{d/2} \mathbf{u} (\varepsilon \mathbf{x}).$  
Then $T_\varepsilon$ is unitary in $L_2(\mathbb{R}^d;\mathbb{C}^n)$. 
We have $A_\varepsilon = \varepsilon^{-2p} T_\varepsilon^* A T_\varepsilon$. Hence,
$
e^{-i \tau A_\varepsilon} =  T_\varepsilon^* e^{-i \tau \varepsilon^{-2p}  A} T_\varepsilon.
$
A~similar relation holds for the effective operator $A^0$:
$
e^{-i \tau A^0} =  T_\varepsilon^* e^{-i \tau \varepsilon^{-2p}  A^0} T_\varepsilon.
$
Applying the scaling transformation to the resolvent of $H_0 = -\Delta$, we obtain
$$
(H_0 + I)^{-1} = \varepsilon^2  T_\varepsilon^* (H_0 + \varepsilon^2 I)^{-1}  T_\varepsilon =
 T_\varepsilon^* R_0(\varepsilon)  T_\varepsilon.
$$
Using these relations and taking into account that $T_\varepsilon$ is unitary, we have
\begin{equation}
\label{exp_scaling}
\begin{aligned}
&\Bigl\| \left( e^{-i \tau A_\varepsilon} - e^{-i \tau A^0} \right) (H_0 +I)^{-s/2} 
\Bigr\|_{L_2(\mathbb{R}^d) \to L_2(\mathbb{R}^d)} 
\\
&=
\Bigl\| \left( e^{-i \tau \varepsilon^{-2p}  A}  - e^{-i \tau \varepsilon^{-2p} A^0} \right) 
R_0(\varepsilon)^{s/2}  \Bigr\|_{L_2(\mathbb{R}^d) \to L_2(\mathbb{R}^d)},\ \ s>0.
\end{aligned}
\end{equation}

\subsection{Approximation for the operator exponential $e^{-i \tau A_\varepsilon}$}

Combining \eqref{exp_scaling} and \eqref{exp_est4_DO}, we see that
\begin{equation*}
\Bigl\| \left( e^{-i \tau A_\varepsilon} - e^{-i \tau A^0} \right) (H_0 +I)^{-(p+1/2)} \Bigr\|_{L_2(\mathbb{R}^d) \to L_2(\mathbb{R}^d)} 
\! \leqslant  \! {\mathfrak C}_1 (1+|\tau|) \varepsilon,\quad \tau \in \mathbb{R},\ \varepsilon >0.
\end{equation*}
Since $(H_0+I)^{p+1/2}$ is an isometric isomorphism of  
the Sobolev space $H^{2p+1}(\mathbb{R}^d;\mathbb{C}^n)$ onto $L_2(\mathbb{R}^d;\mathbb{C}^n)$, this yields the following result.

\begin{thm}\label{th_Aeps}
For $\tau \in {\mathbb R}$ and $\varepsilon >0$ we have
\begin{equation}
\label{hom_est5_DO}
\left\|   e^{-i \tau A_\varepsilon} - e^{-i \tau  A^0}
   \right\|_{H^{2p+1}(\mathbb{R}^d) \to L_2(\mathbb{R}^d)} 
\leqslant {\mathfrak C}_1 (1+|\tau|) \varepsilon.
\end{equation}
The constant ${\mathfrak C}_1$ depends only on $d$, $p$, $\alpha_0$, $\alpha_1$, $\|g\|_{L_\infty}$, $\|g^{-1}\|_{L_\infty}$,
and the parameters of the lattice $\Gamma$. 
\end{thm}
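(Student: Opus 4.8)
The plan is to reduce the estimate to Theorem~\ref{th_A} by means of the scaling transformation $T_\varepsilon$, and then to reinterpret the resulting $L_2\to L_2$ operator norm as the norm of an operator from $H^{2p+1}$ into $L_2$. First I would record the intertwining relations for the dilation $T_\varepsilon$. Since $A_\varepsilon = \varepsilon^{-2p} T_\varepsilon^* A T_\varepsilon$, and since $A^0$ has a \emph{constant} coefficient matrix $g^0$ and hence transforms under $T_\varepsilon$ with the same $\varepsilon^{-2p}$ factor, the functional calculus gives $e^{-i\tau A_\varepsilon} = T_\varepsilon^* e^{-i\tau\varepsilon^{-2p} A} T_\varepsilon$ and likewise for $A^0$. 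Applying the same dilation to the resolvent of $H_0=-\Delta$ yields $(H_0+I)^{-1} = T_\varepsilon^* R_0(\varepsilon) T_\varepsilon$, where $R_0(\varepsilon) = \varepsilon^2(H_0+\varepsilon^2 I)^{-1}$, and therefore $(H_0+I)^{-(p+1/2)} = T_\varepsilon^* R_0(\varepsilon)^{p+1/2} T_\varepsilon$. Because $T_\varepsilon$ is unitary on $L_2(\mathbb{R}^d;\mathbb{C}^n)$, conjugating $\bigl(e^{-i\tau A_\varepsilon}-e^{-i\tau A^0}\bigr)(H_0+I)^{-(p+1/2)}$ by $T_\varepsilon$ leaves its $L_2\to L_2$ norm unchanged; this is precisely identity~\eqref{exp_scaling} with $s=2p+1$.

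Next I would invoke Theorem~\ref{th_A} directly, which is established in Section~4 via the direct integral expansion~\eqref{direct2} and the fiberwise bound of Theorem~\ref{th_A(k)}: it yields
\[
\bigl\| \bigl(e^{-i\tau\varepsilon^{-2p}A}-e^{-i\tau\varepsilon^{-2p}A^0}\bigr) R_0(\varepsilon)^{p+1/2}\bigr\|_{L_2(\mathbb{R}^d)\to L_2(\mathbb{R}^d)} \leqslant \mathfrak{C}_1(1+|\tau|)\varepsilon ,
\]
uniformly in $\varepsilon>0$ and $\tau\in\mathbb{R}$, with $\mathfrak{C}_1$ depending only on the listed data. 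Combining this with the scaling identity of the previous step gives $\bigl\| \bigl(e^{-i\tau A_\varepsilon}-e^{-i\tau A^0}\bigr)(H_0+I)^{-(p+1/2)}\bigr\|_{L_2\to L_2} \leqslant \mathfrak{C}_1(1+|\tau|)\varepsilon$. Finally, since $(H_0+I)^{p+1/2}$ is, by the Fourier transform, an isometric isomorphism of $H^{2p+1}(\mathbb{R}^d;\mathbb{C}^n)$ onto $L_2(\mathbb{R}^d;\mathbb{C}^n)$, substituting $\mathbf{u}=(H_0+I)^{-(p+1/2)}\mathbf{w}$ converts the left-hand side into $\|e^{-i\tau A_\varepsilon}-e^{-i\tau A^0}\|_{H^{2p+1}\to L_2}$, which is exactly \eqref{hom_est5_DO}.

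The genuine difficulty lies entirely upstream — in the abstract threshold analysis (Proposition~\ref{prop1.1}, the smoothed exponential bound of Theorem~\ref{th_est3}), its transfer to the operator family $A(\mathbf{k})$ uniformly in the angular parameter $\boldsymbol{\theta}$, and the assembly over $\widetilde{\Omega}$ via the direct integral. Within the present statement there is no real obstacle: the only points that must be checked with care are the dilation-covariance of $A^0$ (so that both exponentials are rescaled by the same power of $\varepsilon$), the compatibility of the conjugation identity with the fractional power $p+1/2$, and the fact that $\mathfrak{C}_1$ is $\varepsilon$-independent so that the bound is truly $O(\varepsilon)$. Once Theorem~\ref{th_A} is available, Theorem~\ref{th_Aeps} is a bookkeeping consequence of it.
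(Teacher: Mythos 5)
Your argument is correct and is essentially identical to the paper's proof: scale by $T_\varepsilon$ to reduce to Theorem~\ref{th_A} via identity~\eqref{exp_scaling} with $s=2p+1$, then use that $(H_0+I)^{p+1/2}$ is an isometric isomorphism of $H^{2p+1}(\mathbb{R}^d;\mathbb{C}^n)$ onto $L_2(\mathbb{R}^d;\mathbb{C}^n)$ to translate the $L_2\to L_2$ bound into the $H^{2p+1}\to L_2$ statement. No gaps; the bookkeeping on the dilation covariance of $A^0$ and of the fractional power of the resolvent matches the paper exactly.
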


Interpolating between the obvious estimate
$
\bigl\|   e^{-i \tau A_\varepsilon} - e^{-i \tau  A^0}
   \bigr\|_{L_2(\mathbb{R}^d) \to L_2(\mathbb{R}^d)} \leqslant 2 
$
and \eqref{hom_est5_DO}, we obtain the following corollary.

\begin{corollary}\label{cor1}
Let $0\leqslant s \leqslant 2p+1$. For $\tau \in {\mathbb R}$ and $\varepsilon >0$ we have
\begin{equation*}
\left\|   e^{-i \tau A_\varepsilon} - e^{-i \tau  A^0}
   \right\|_{H^{s}(\mathbb{R}^d) \to L_2(\mathbb{R}^d)} 
\leqslant C(s) (1+|\tau|)^{s/(2p+1)} \varepsilon^{s/(2p+1)}.
\end{equation*}
Here $C(s) = 2^{1-s/(2p+1)} {\mathfrak C}_1^{s/(2p+1)}$.
\end{corollary}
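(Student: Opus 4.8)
The plan is to obtain the estimate by complex interpolation between the trivial bound and Theorem~\ref{th_Aeps}. Set $\mathcal{T} := e^{-i\tau A_\varepsilon} - e^{-i\tau A^0}$. Since $(H_0+I)^{s/2}$ is an isometric isomorphism of $H^s(\mathbb{R}^d;\mathbb{C}^n)$ onto $L_2(\mathbb{R}^d;\mathbb{C}^n)$, one has $\|\mathcal{T}\|_{H^s\to L_2} = \|\mathcal{T}(H_0+I)^{-s/2}\|_{L_2\to L_2}$, so it suffices to estimate the $L_2\to L_2$ norm of the latter operator. First I would record the two endpoints: since $e^{-i\tau A_\varepsilon}$ and $e^{-i\tau A^0}$ are unitary, $\|\mathcal{T}(H_0+I)^{0}\|_{L_2\to L_2} = \|\mathcal{T}\|_{L_2\to L_2} \leqslant 2$; and by Theorem~\ref{th_Aeps}, read through the isometry $(H_0+I)^{p+1/2}\colon H^{2p+1}\to L_2$, one has $\|\mathcal{T}(H_0+I)^{-(2p+1)/2}\|_{L_2\to L_2}\leqslant \mathfrak{C}_1(1+|\tau|)\varepsilon$.

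Next I would introduce the operator-valued function $\Phi(z):=\mathcal{T}(H_0+I)^{-(2p+1)z/2}$, defined via the spectral theorem for $H_0=-\Delta$. Because $\sigma(H_0+I)\subset[1,\infty)$, the factor $(H_0+I)^{-(2p+1)z/2}$ has norm $\leqslant 1$ for $\operatorname{Re}z\in[0,1]$, and $\Phi$ is analytic and uniformly bounded (by $2$) on the closed strip $0\leqslant\operatorname{Re}z\leqslant 1$. On $\operatorname{Re}z=0$ the operator $(H_0+I)^{-(2p+1)iy/2}$ is unitary, so $\|\Phi(iy)\|_{L_2\to L_2}\leqslant 2$; on $\operatorname{Re}z=1$ one has $\|\Phi(1+iy)\|_{L_2\to L_2}\leqslant\|\mathcal{T}(H_0+I)^{-(2p+1)/2}\|_{L_2\to L_2}\leqslant\mathfrak{C}_1(1+|\tau|)\varepsilon$. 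Applying the Hadamard three-lines theorem to the scalar analytic functions $z\mapsto(\Phi(z)\mathbf{f},\mathbf{h})_{L_2}$ with $\mathbf{f},\mathbf{h}$ ranging over the unit balls of $L_2(\mathbb{R}^d;\mathbb{C}^n)$, and evaluating at $z=\theta:=s/(2p+1)\in[0,1]$, gives
\begin{equation*}
\|\Phi(\theta)\|_{L_2\to L_2}\leqslant 2^{1-\theta}\bigl(\mathfrak{C}_1(1+|\tau|)\varepsilon\bigr)^{\theta}.
\end{equation*}
Since $(2p+1)\theta/2 = s/2$, we have $\Phi(\theta)=\mathcal{T}(H_0+I)^{-s/2}$, and this is precisely the asserted bound with $C(s)=2^{1-s/(2p+1)}\mathfrak{C}_1^{s/(2p+1)}$.

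I do not anticipate a genuine obstacle here: the argument is routine operator interpolation. The only step requiring a modicum of care is verifying that $\Phi$ is analytic and uniformly bounded on the closed strip so that the three-lines theorem applies, but this is immediate from the functional calculus for the self-adjoint operator $H_0$ together with $\sigma(H_0+I)\subset[1,\infty)$. (Alternatively, one may simply cite a standard interpolation theorem; the explicit form of $C(s)$ is exactly what the three-lines estimate yields.)
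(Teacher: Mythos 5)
Your proof is correct and carries out exactly the interpolation step the paper invokes (the paper simply says ``interpolating between the obvious estimate and \eqref{hom_est5_DO}'' without spelling out the three-lines argument). The analytic family $\Phi(z)=\mathcal{T}(H_0+I)^{-(2p+1)z/2}$, the unitarity of the imaginary powers $(H_0+I)^{-iy(2p+1)/2}$, and the evaluation at $\theta=s/(2p+1)$ all check out and reproduce the constant $C(s)=2^{1-s/(2p+1)}\mathfrak{C}_1^{s/(2p+1)}$ stated in the corollary.
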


Similarly, by the scaling transformation, we deduce the following result from Theorem \ref{th_A_G=0_2}.

\begin{thm}\label{th_Aeps_G=0_2}
Let $G (\boldsymbol{\theta})$ be the operator given by  \eqref{G(theta)=}, \eqref{g1(theta)=}.
Suppose that $G (\boldsymbol{\theta})=0$ for any $\boldsymbol{\theta} \in \mathbb{S}^{d-1}$.
Then for $\tau \in {\mathbb R}$ and $\varepsilon >0$ we have
\begin{equation*}
\left\|   e^{-i \tau A_\varepsilon} - e^{-i \tau  A^0}
   \right\|_{H^{2p+2}(\mathbb{R}^d) \to L_2(\mathbb{R}^d)} 
\leqslant {\mathfrak C}_2 (1+|\tau|) \varepsilon^2.
\end{equation*}
The constant ${\mathfrak C}_2$ depends only on $d$, $p$, $\alpha_0$, $\alpha_1$, $\|g\|_{L_\infty}$, $\|g^{-1}\|_{L_\infty}$,
and the parameters of the lattice $\Gamma$. 
\end{thm}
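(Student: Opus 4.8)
The plan is to obtain the estimate from Theorem~\ref{th_A_G=0_2} by exactly the scaling argument that yielded Theorem~\ref{th_Aeps} from Theorem~\ref{th_A}. All of the analytic content is already in place: the refined threshold approximation of Proposition~\ref{prop1.1a}, the bound on $J(t,\tau\varepsilon^{-2p})$ with the improved smoothing power in Theorem~\ref{th_est3_improved}, its transfer to the fibers $A(\mathbf{k})$ in Theorem~\ref{th_A(k)_G=0_2}, and the direct-integral assembly in Theorem~\ref{th_A_G=0_2}. What remains is purely a change of variables together with a Sobolev-norm identification; the hypothesis $G(\boldsymbol{\theta})=0$ is what unlocks the $\varepsilon^2$-rate in all of these (and, by Proposition~\ref{prop_cond_G=0}, covers in particular the scalar real-coefficient case).

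First I would recall the scaling identities from Subsection~5.1: $e^{-i\tau A_\varepsilon} = T_\varepsilon^* e^{-i\tau\varepsilon^{-2p} A} T_\varepsilon$, the analogous identity for $A^0$, and $(H_0+I)^{-1} = T_\varepsilon^* R_0(\varepsilon) T_\varepsilon$, with $T_\varepsilon$ unitary on $L_2(\mathbb{R}^d;\mathbb{C}^n)$. Raising the last identity to the power $p+1$ and substituting into the operator norm gives, exactly as in \eqref{exp_scaling} with $s=2p+2$,
\[
\bigl\| \bigl( e^{-i\tau A_\varepsilon} - e^{-i\tau A^0}\bigr)(H_0+I)^{-(p+1)}\bigr\|_{L_2(\mathbb{R}^d)\to L_2(\mathbb{R}^d)}
= \bigl\| \bigl( e^{-i\tau\varepsilon^{-2p} A} - e^{-i\tau\varepsilon^{-2p} A^0}\bigr) R_0(\varepsilon)^{p+1}\bigr\|_{L_2(\mathbb{R}^d)\to L_2(\mathbb{R}^d)} .
\]

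Next I would invoke Theorem~\ref{th_A_G=0_2}: since $G(\boldsymbol{\theta})=0$ for every $\boldsymbol{\theta}\in\mathbb{S}^{d-1}$ by hypothesis, the right-hand side above is bounded by ${\mathfrak C}_2(1+|\tau|)\varepsilon^2$, with ${\mathfrak C}_2$ depending only on the stated data. Finally, since $H_0=-\Delta$ has order $2$, the operator $(H_0+I)^{p+1}$ is an isometric isomorphism of $H^{2p+2}(\mathbb{R}^d;\mathbb{C}^n)$ onto $L_2(\mathbb{R}^d;\mathbb{C}^n)$; hence the left-hand side equals $\|e^{-i\tau A_\varepsilon} - e^{-i\tau A^0}\|_{H^{2p+2}(\mathbb{R}^d)\to L_2(\mathbb{R}^d)}$, which completes the argument. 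There is no real obstacle in this final assembly; the only point deserving care is the bookkeeping of exponents — namely that the smoothing factor $R_0(\varepsilon)^{p+1}$ of Theorem~\ref{th_A_G=0_2}, which traces back to $\varepsilon^{2p+2}(t^2+\varepsilon^2)^{-(p+1)}$ in Theorem~\ref{th_est3_improved} and to the power $|t|^p$ in Proposition~\ref{prop1.1a} (where $p\geqslant 2$ is used), corresponds after scaling to precisely the Sobolev index $2p+2$, because $\operatorname{ord}(H_0)=2$.
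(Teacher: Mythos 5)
Your argument is correct and coincides with the paper's own proof, which simply remarks that Theorem~\ref{th_Aeps_G=0_2} follows from Theorem~\ref{th_A_G=0_2} ``by the scaling transformation,'' mirroring the derivation of Theorem~\ref{th_Aeps} from Theorem~\ref{th_A}. You have merely made explicit the application of \eqref{exp_scaling} with $s=2p+2$ and the isometry $(H_0+I)^{p+1}:H^{2p+2}(\mathbb{R}^d;\mathbb{C}^n)\to L_2(\mathbb{R}^d;\mathbb{C}^n)$, which is exactly what the paper leaves to the reader.
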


\begin{corollary}\label{cor3}
Under the assumptions of Theorem \emph{\ref{th_Aeps_G=0_2}}, let $0\leqslant s \leqslant 2p+2$. For $\tau \in {\mathbb R}$ and $\varepsilon >0$ we have
\begin{equation*}
\left\|   e^{-i \tau A_\varepsilon} - e^{-i \tau  A^0}
   \right\|_{H^{s}(\mathbb{R}^d) \to L_2(\mathbb{R}^d)} 
\leqslant C'(s) (1+|\tau|)^{s/(2p+2)} \varepsilon^{s/(p+1)}.
\end{equation*}
Here $C'(s) = 2^{1-s/(2p+2)} {\mathfrak C}_2^{s/(2p+2)}$. In particular, for $s =p+1$
\begin{equation}
\label{hom_est26_DO_p+1}
\left\|   e^{-i \tau A_\varepsilon} - e^{-i \tau  A^0}
   \right\|_{H^{p+1}(\mathbb{R}^d) \to L_2(\mathbb{R}^d)} 
\leqslant C'(p+1) (1+|\tau|)^{1/2} \varepsilon.
\end{equation}
\end{corollary}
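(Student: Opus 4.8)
The plan is to derive Corollary \ref{cor3} by complex interpolation between two endpoint bounds, in complete parallel with the deduction of Corollary \ref{cor1} from Theorem \ref{th_Aeps}. The assumptions of Theorem \ref{th_Aeps_G=0_2} (that is, $G(\boldsymbol{\theta}) = 0$ for every $\boldsymbol{\theta} \in \mathbb{S}^{d-1}$) are kept in force throughout.

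First I would record the two endpoints. For $s = 0$: the operators $A_\varepsilon$ and $A^0$ are selfadjoint, so $e^{-i\tau A_\varepsilon}$ and $e^{-i\tau A^0}$ are unitary in $L_2(\mathbb{R}^d;\mathbb{C}^n)$, and hence $\| e^{-i\tau A_\varepsilon} - e^{-i\tau A^0} \|_{L_2(\mathbb{R}^d)\to L_2(\mathbb{R}^d)} \leqslant 2$ for all $\tau \in \mathbb{R}$, $\varepsilon > 0$. For $s = 2p+2$: Theorem \ref{th_Aeps_G=0_2} gives $\| e^{-i\tau A_\varepsilon} - e^{-i\tau A^0} \|_{H^{2p+2}(\mathbb{R}^d)\to L_2(\mathbb{R}^d)} \leqslant \mathfrak{C}_2 (1+|\tau|)\varepsilon^2$, the constant $\mathfrak{C}_2$ being independent of $\tau$ and $\varepsilon$.

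Next I would invoke the standard identification of the complex interpolation spaces of the Sobolev scale, $[L_2(\mathbb{R}^d;\mathbb{C}^n), H^{2p+2}(\mathbb{R}^d;\mathbb{C}^n)]_\theta = H^{(2p+2)\theta}(\mathbb{R}^d;\mathbb{C}^n)$, which follows at once from the Fourier-side formula $\| \mathbf{u} \|_{H^s}^2 = \int_{\mathbb{R}^d} (1+|\boldsymbol{\xi}|^2)^s |\widehat{\mathbf{u}}(\boldsymbol{\xi})|^2\, d\boldsymbol{\xi}$. Applying the interpolation theorem to the single bounded operator $T := e^{-i\tau A_\varepsilon} - e^{-i\tau A^0}$, regarded simultaneously as a map $L_2 \to L_2$ and $H^{2p+2} \to L_2$, with $\theta = s/(2p+2) \in [0,1]$ so that $(2p+2)\theta = s$, I obtain
\[
\| T \|_{H^s(\mathbb{R}^d) \to L_2(\mathbb{R}^d)} \leqslant \| T \|_{L_2 \to L_2}^{1-\theta}\, \| T \|_{H^{2p+2}\to L_2}^{\theta} \leqslant 2^{1-\theta}\bigl( \mathfrak{C}_2 (1+|\tau|)\varepsilon^2 \bigr)^{\theta}.
\]
Since $\theta = s/(2p+2)$ and $2\theta = s/(p+1)$, the right-hand side equals $2^{1-s/(2p+2)} \mathfrak{C}_2^{s/(2p+2)} (1+|\tau|)^{s/(2p+2)}\varepsilon^{s/(p+1)}$, which is the asserted bound with $C'(s) = 2^{1-s/(2p+2)}\mathfrak{C}_2^{s/(2p+2)}$. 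Setting $s = p+1$ gives $\theta = 1/2$ and $\varepsilon^{2\theta} = \varepsilon$, whence \eqref{hom_est26_DO_p+1}.

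I do not expect a real obstacle here. The only points requiring a modicum of care are that the interpolation is applied to one fixed operator with both target spaces equal to $L_2$, so that the image endpoint space is again $L_2$ with the same norm, and that the displayed dependence on $\tau$ and $\varepsilon$ is inherited automatically, because the endpoint quantities $2$ and $\mathfrak{C}_2(1+|\tau|)\varepsilon^2$ are honest operator-norm bounds uniform in the remaining parameters. All the remaining arithmetic of exponents is identical to that already performed for Corollary \ref{cor1}.
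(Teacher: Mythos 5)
Your proof is correct and follows exactly the route the paper intends: interpolate the operator $T = e^{-i\tau A_\varepsilon} - e^{-i\tau A^0}$ between the trivial unitarity bound $\|T\|_{L_2\to L_2}\leqslant 2$ and the $H^{2p+2}\to L_2$ bound of Theorem \ref{th_Aeps_G=0_2}, using $[L_2,H^{2p+2}]_\theta = H^{(2p+2)\theta}$, which reproduces the stated constant $C'(s)$ and the exponents on $(1+|\tau|)$ and $\varepsilon$. This is the same interpolation argument the paper uses for Corollary \ref{cor1}; the paper simply states Corollary \ref{cor3} as the analogous consequence without spelling it out.
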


Recall that some sufficient conditions ensuring that $G (\boldsymbol{\theta})\equiv 0$ are given in Proposition \ref{prop_cond_G=0}.

\begin{rem}
Theorem \ref{th_Aeps_G=0_2} shows that, if $G (\boldsymbol{\theta})\equiv 0$,
the difference $e^{-i \tau A_\varepsilon} - e^{-i \tau  A^0}$ is of order $O(\varepsilon^2)$ in a suitable norm.
Estimate \eqref{hom_est26_DO_p+1} improves \eqref{hom_est5_DO} regarding both 
the norm type and the dependence of the estimate on $\tau$.
We note that for the second-order operators there is analog of  estimate \eqref{hom_est26_DO_p+1}  (see \cite{D}), but there is no analog of Theorem \ref{th_Aeps_G=0_2}.
\end{rem}

\subsection{Homogenization of the Cauchy problem for the  Schr{\"o}dinger-type equation}

Let $\mathbf{u}_\varepsilon(\mathbf{x}, \tau)$ be the solution of the following Cauchy problem:
\begin{equation}
\label{Cauchy}
\begin{aligned}
& i \,\partial_\tau \mathbf{u}_\varepsilon(\mathbf{x}, \tau) = b(\mathbf{D})^* g^\varepsilon(\mathbf{x}) b(\mathbf{D}) 
\mathbf{u}_\varepsilon(\mathbf{x}, \tau) + \mathbf{F}(\mathbf{x}, \tau),\quad \mathbf{x}\in \mathbb{R}^d,\ \;
 \tau \in \mathbb{R},
 \\
 & \mathbf{u}_\varepsilon(\mathbf{x}, 0) = \boldsymbol{\phi} (\mathbf{x}),\quad \mathbf{x}\in \mathbb{R}^d,
\end{aligned}
\end{equation}
where $\boldsymbol{\phi} \in L_2(\mathbb{R}^d; \mathbb{C}^n)$ and
$\mathbf{F} \in L_{1,\text{loc}}( \mathbb{R};L_2(\mathbb{R}^d; \mathbb{C}^n))$.
The solution of problem \eqref{Cauchy} admits the following representation:
\begin{equation}
\label{Cauchy2}
\mathbf{u}_\varepsilon(\cdot, \tau) = e^{-i \tau A_\varepsilon}  \boldsymbol{\phi} - i \int_0^\tau
e^{-i (\tau - \widetilde{\tau}) A_\varepsilon} \mathbf{F}(\cdot, \widetilde{\tau})\, d\widetilde{\tau}.
\end{equation}

Let $\mathbf{u}_0(\mathbf{x}, \tau)$ be the solution of the homogenized Cauchy problem:
\begin{equation}
\label{Cauchy_eff}
\begin{aligned}
& i \,\partial_\tau \mathbf{u}_0(\mathbf{x}, \tau) = b(\mathbf{D})^* g^0 b(\mathbf{D}) 
\mathbf{u}_0(\mathbf{x}, \tau) + \mathbf{F}(\mathbf{x}, \tau),\quad \mathbf{x}\in \mathbb{R}^d,\ \;
 \tau \in \mathbb{R},
 \\
 & \mathbf{u}_0 (\mathbf{x}, 0) = \boldsymbol{\phi} (\mathbf{x}),\quad \mathbf{x}\in \mathbb{R}^d.
\end{aligned}
\end{equation}
The solution of problem \eqref{Cauchy_eff} can be represented as
\begin{equation}
\label{Cauchy_eff2}
\mathbf{u}_0(\cdot, \tau) = e^{-i \tau A^0}  \boldsymbol{\phi} - i \int_0^\tau
e^{-i (\tau - \widetilde{\tau}) A^0} \mathbf{F}(\cdot, \widetilde{\tau})\, d\widetilde{\tau}.
\end{equation}

\begin{thm}\label{th_Cauchy}
Let $\mathbf{u}_\varepsilon(\mathbf{x}, \tau)$ be the solution of the Cauchy problem \eqref{Cauchy}.
Let $\mathbf{u}_0(\mathbf{x}, \tau)$ be the solution of the homogenized problem \eqref{Cauchy_eff}.

\noindent $1^\circ$. Let $0 \leqslant s \leqslant 2p+1$. 
If $\boldsymbol{\phi} \in H^{s}(\mathbb{R}^d; \mathbb{C}^n)$ and
\emph{$\mathbf{F} \in L_{1,\text{loc}}( \mathbb{R};H^{s}(\mathbb{R}^d; \mathbb{C}^n))$}, then 
for $\tau \in {\mathbb R}$ and $\varepsilon >0$ we have
\begin{equation}
\label{Cauchy_interp0}
\begin{aligned}
&\left\| \mathbf{u}_\varepsilon(\cdot, \tau) - \mathbf{u}_0(\cdot, \tau) \right\|_{L_2(\mathbb{R}^d)} \leqslant
C(s) (1+ |\tau|)^{s/(2p+1)} \varepsilon^{s/(2p+1)}
\\
 &\times \left( \| \boldsymbol{\phi} \|_{H^{s}(\mathbb{R}^d)} + 
\| \mathbf{F} \|_{L_1((0,\tau);H^{s}(\mathbb{R}^d))} \right).
\end{aligned}
\end{equation}

\noindent $2^\circ$.  
If $\boldsymbol{\phi} \in L_2(\mathbb{R}^d; \mathbb{C}^n)$ and
\emph{$\mathbf{F} \in L_{1,\textrm{loc}}( \mathbb{R};L_2(\mathbb{R}^d; \mathbb{C}^n))$}, then 
for $\tau \in {\mathbb R}$  we have
$$
\lim_{\varepsilon \to 0}\left\| \mathbf{u}_\varepsilon(\cdot, \tau) - \mathbf{u}_0(\cdot, \tau) \right\|_{L_2(\mathbb{R}^d)} =0.
$$
\end{thm}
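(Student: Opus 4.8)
The plan is to reduce everything to the operator estimates of Section~4 via the Duhamel formulas \eqref{Cauchy2} and \eqref{Cauchy_eff2}. Subtracting them yields
\begin{equation*}
\mathbf{u}_\varepsilon(\cdot,\tau) - \mathbf{u}_0(\cdot,\tau) = \bigl(e^{-i\tau A_\varepsilon} - e^{-i\tau A^0}\bigr)\boldsymbol{\phi} - i\int_0^\tau \bigl(e^{-i(\tau-\widetilde\tau)A_\varepsilon} - e^{-i(\tau-\widetilde\tau)A^0}\bigr)\mathbf{F}(\cdot,\widetilde\tau)\,d\widetilde\tau,
\end{equation*}
so it suffices to estimate the two terms on the right-hand side separately.

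For part $1^\circ$, I would apply Corollary~\ref{cor1} to both terms. The first term is bounded at once by $C(s)(1+|\tau|)^{s/(2p+1)}\varepsilon^{s/(2p+1)}\|\boldsymbol{\phi}\|_{H^s}$. For the integrand, Corollary~\ref{cor1} applied with $\tau$ replaced by $\tau-\widetilde\tau$ gives the pointwise bound $C(s)(1+|\tau-\widetilde\tau|)^{s/(2p+1)}\varepsilon^{s/(2p+1)}\|\mathbf{F}(\cdot,\widetilde\tau)\|_{H^s}$; since $\widetilde\tau$ ranges between $0$ and $\tau$ we have $|\tau-\widetilde\tau|\le|\tau|$, so $(1+|\tau-\widetilde\tau|)^{s/(2p+1)}$ may be replaced by $(1+|\tau|)^{s/(2p+1)}$ and pulled out of the integral. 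Integrating and using $\int_0^\tau\|\mathbf{F}(\cdot,\widetilde\tau)\|_{H^s}\,d\widetilde\tau = \|\mathbf{F}\|_{L_1((0,\tau);H^s)}$ (with the usual convention on the orientation of the integral when $\tau<0$), then adding the two contributions — which, being of the form $K\|\boldsymbol{\phi}\|_{H^s}$ and $K\|\mathbf{F}\|_{L_1((0,\tau);H^s)}$ with the same factor $K$, combine without any loss in the constant — yields \eqref{Cauchy_interp0}. The only technical points are the measurability and integrability of $\widetilde\tau\mapsto\|(e^{-i(\tau-\widetilde\tau)A_\varepsilon}-e^{-i(\tau-\widetilde\tau)A^0})\mathbf{F}(\cdot,\widetilde\tau)\|_{L_2}$, which follow from $\mathbf{F}\in L_{1,\mathrm{loc}}(\mathbb{R};L_2)$ together with the uniform bound $\|e^{-i\rho A_\varepsilon}-e^{-i\rho A^0}\|_{L_2\to L_2}\le 2$.

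For part $2^\circ$, I would argue by density. Fix $\tau$ and $\eta>0$, and choose $\boldsymbol{\phi}_\eta\in H^{2p+1}(\mathbb{R}^d;\mathbb{C}^n)$ with $\|\boldsymbol{\phi}-\boldsymbol{\phi}_\eta\|_{L_2}<\eta$, and $\mathbf{F}_\eta\in L_1((0,\tau);H^{2p+1}(\mathbb{R}^d;\mathbb{C}^n))$ with $\|\mathbf{F}-\mathbf{F}_\eta\|_{L_1((0,\tau);L_2)}<\eta$ (possible since $H^{2p+1}$ is dense in $L_2$, and, on the bounded time interval, $L_1((0,\tau);H^{2p+1})$ is dense in $L_1((0,\tau);L_2)$). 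Splitting $\boldsymbol{\phi}$ and $\mathbf{F}$ accordingly in the identity above, the contributions of $\boldsymbol{\phi}_\eta$ and $\mathbf{F}_\eta$ are estimated by Theorem~\ref{th_Aeps} and tend to zero as $\varepsilon\to0$ for fixed $\eta$, while the contributions of the remainders are bounded, uniformly in $\varepsilon$, by $2\|\boldsymbol{\phi}-\boldsymbol{\phi}_\eta\|_{L_2}+2\|\mathbf{F}-\mathbf{F}_\eta\|_{L_1((0,\tau);L_2)}<4\eta$ (again using $\|e^{-i\rho A_\varepsilon}-e^{-i\rho A^0}\|\le2$). Hence $\limsup_{\varepsilon\to0}\|\mathbf{u}_\varepsilon(\cdot,\tau)-\mathbf{u}_0(\cdot,\tau)\|_{L_2}\le4\eta$, and letting $\eta\to0$ completes the proof.

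Since all the analytic substance has been packaged into Sections~1--4, I do not expect a genuine obstacle here; the only points needing mild care are the bookkeeping of the time variable in the Duhamel term (the inequality $|\tau-\widetilde\tau|\le|\tau|$ together with the orientation of $\int_0^\tau$ for $\tau<0$) and, in part $2^\circ$, the standard Bochner-space density fact that $L_1((0,\tau);H^{2p+1})$ is dense in $L_1((0,\tau);L_2)$.
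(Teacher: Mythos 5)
Your proof is correct and follows essentially the same route as the paper: part $1^\circ$ via the Duhamel representations and Corollary~\ref{cor1}, and part $2^\circ$ via the standard uniform-boundedness-plus-density argument that the paper phrases as an application of the Banach--Steinhaus theorem (the uniform bound comes from the $s=0$ case and convergence on a dense set from $s=2p+1$, exactly as you argue).
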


\begin{proof}
Statement $1^\circ$ follows from Corollary \ref{cor1} and representations \eqref{Cauchy2}, \eqref{Cauchy_eff2}.

Estimate \eqref{Cauchy_interp0} with $s=0$ means that
the norm $\left\| \mathbf{u}_\varepsilon(\cdot, \tau) - \mathbf{u}_0(\cdot, \tau) \right\|_{L_2(\mathbb{R}^d)}$
is uniformly bounded
provided that $\boldsymbol{\phi} \in L_2(\mathbb{R}^d; \mathbb{C}^n)$ and
$\mathbf{F} \in L_{1,\textrm{loc}}( \mathbb{R};L_2(\mathbb{R}^d; \mathbb{C}^n))$.
Hence, using  statement $1^\circ$ with $s=2p+1$ and applying the  Banach--Steinhaus theorem, we obtain 
statement $2^\circ$.
\end{proof}

Similarly, from Corollary \ref{cor3} we deduce the following result.

\begin{thm}\label{th_Cauchy3}
Let $G (\boldsymbol{\theta})$ be the operator given by  \eqref{G(theta)=}, \eqref{g1(theta)=}.
Suppose that \hbox{$G (\boldsymbol{\theta})=0$} for any $\boldsymbol{\theta} \in \mathbb{S}^{d-1}$.
Let $\mathbf{u}_\varepsilon(\mathbf{x}, \tau)$ be the solution of the Cauchy problem \eqref{Cauchy}.
Let $\mathbf{u}_0(\mathbf{x}, \tau)$ be the solution of the homogenized problem \eqref{Cauchy_eff}.
 Let $0 \leqslant s \leqslant 2p+2$. 
If $\boldsymbol{\phi} \in H^{s}(\mathbb{R}^d; \mathbb{C}^n)$ and
\emph{$\mathbf{F} \in L_{1,\text{loc}}( \mathbb{R};H^{s}(\mathbb{R}^d; \mathbb{C}^n))$}, then 
for $\tau \in {\mathbb R}$ and $\varepsilon >0$ we have
\begin{equation}
\label{Cauchy_interp}
\begin{aligned}
&\left\| \mathbf{u}_\varepsilon(\cdot, \tau) - \mathbf{u}_0(\cdot, \tau) \right\|_{L_2(\mathbb{R}^d)} \leqslant
C'(s) (1+ |\tau|)^{s/(2p+2)} \varepsilon^{s/(p+1)}
\\
& \times
\left( \| \boldsymbol{\phi} \|_{H^{s}(\mathbb{R}^d)} + 
\| \mathbf{F} \|_{L_1((0,\tau);H^{s}(\mathbb{R}^d))} \right).
\end{aligned}
\end{equation}
\end{thm}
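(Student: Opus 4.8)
The plan is to follow the proof of Theorem~\ref{th_Cauchy}\,$1^\circ$ essentially verbatim, with Corollary~\ref{cor1} replaced by the stronger Corollary~\ref{cor3}, which is available precisely under the hypothesis $G(\boldsymbol{\theta}) \equiv 0$. First I would subtract the Duhamel representations \eqref{Cauchy2} and \eqref{Cauchy_eff2} of the two solutions, obtaining
$$
\mathbf{u}_\varepsilon(\cdot, \tau) - \mathbf{u}_0(\cdot, \tau)
= \left( e^{-i \tau A_\varepsilon} - e^{-i \tau A^0} \right) \boldsymbol{\phi}
- i \int_0^\tau \left( e^{-i (\tau - \widetilde{\tau}) A_\varepsilon} - e^{-i (\tau - \widetilde{\tau}) A^0} \right) \mathbf{F}(\cdot, \widetilde{\tau}) \, d\widetilde{\tau}.
$$
It then remains to estimate the two terms on the right in the $L_2(\mathbb{R}^d)$-norm.

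For the first term, Corollary~\ref{cor3} applied with the given exponent $s$ gives at once $\bigl\| ( e^{-i \tau A_\varepsilon} - e^{-i \tau A^0} ) \boldsymbol{\phi} \bigr\|_{L_2(\mathbb{R}^d)} \leqslant C'(s) (1 + |\tau|)^{s/(2p+2)} \varepsilon^{s/(p+1)} \| \boldsymbol{\phi} \|_{H^s(\mathbb{R}^d)}$. For the Duhamel term I would move the $L_2$-norm under the integral sign (legitimate since $\widetilde{\tau} \mapsto \mathbf{F}(\cdot, \widetilde{\tau})$ is strongly measurable and locally Bochner-integrable into $H^s(\mathbb{R}^d; \mathbb{C}^n)$ and the operator difference depends strongly continuously on $\widetilde{\tau}$), apply Corollary~\ref{cor3} to each integrand with $\tau$ replaced by $\tau - \widetilde{\tau}$, and use the elementary monotonicity bound $(1 + |\tau - \widetilde{\tau}|)^{s/(2p+2)} \leqslant (1 + |\tau|)^{s/(2p+2)}$, valid for every $\widetilde{\tau}$ lying between $0$ and $\tau$. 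This estimates the Duhamel term by $C'(s) (1 + |\tau|)^{s/(2p+2)} \varepsilon^{s/(p+1)} \int_0^\tau \| \mathbf{F}(\cdot, \widetilde{\tau}) \|_{H^s(\mathbb{R}^d)} \, d\widetilde{\tau} = C'(s) (1 + |\tau|)^{s/(2p+2)} \varepsilon^{s/(p+1)} \| \mathbf{F} \|_{L_1((0,\tau); H^s(\mathbb{R}^d))}$. Adding the two contributions yields exactly \eqref{Cauchy_interp}.

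The argument is routine and I do not anticipate any genuine obstacle; the only points demanding a little care are the interchange of the Bochner integral with the operator-norm estimate, and the correct reading of $\int_0^\tau$ and of $\| \mathbf{F} \|_{L_1((0,\tau);\,\cdot\,)}$ when $\tau < 0$ (integration over $[\min(0,\tau), \max(0,\tau)]$), both handled exactly as in the proof of Theorem~\ref{th_Cauchy}. The slightly subtle conceptual input — the vanishing of the correction operator $G(\boldsymbol{\theta})$ and the resulting $O(\varepsilon^2)$-type bound — has already been absorbed into Corollary~\ref{cor3}, so at this stage the proof is purely a bookkeeping exercise with the two terms of the Duhamel formula.
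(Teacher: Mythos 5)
Your proposal is correct and follows essentially the same route as the paper: the paper simply remarks that Theorem~\ref{th_Cauchy3} follows from Corollary~\ref{cor3} and the Duhamel representations \eqref{Cauchy2}, \eqref{Cauchy_eff2} in exactly the way Theorem~\ref{th_Cauchy}\,$1^\circ$ follows from Corollary~\ref{cor1}. Your expanded bookkeeping (moving the norm under the Bochner integral, using $(1+|\tau-\widetilde\tau|)^{s/(2p+2)} \leqslant (1+|\tau|)^{s/(2p+2)}$, and reading $\int_0^\tau$ over $[\min(0,\tau),\max(0,\tau)]$ when $\tau<0$) is the same argument spelled out in full.
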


\begin{acknowledgments}
This research was  supported by the Russian Science Foundation (grant no. 17-11-01069).
\end{acknowledgments}

\small

\end{document}